\documentclass[12pt, reqno]{amsart}
\usepackage{amsmath,amsopn,amssymb,amsthm}
\usepackage[english]{babel}
\usepackage[backref=page, breaklinks=true,colorlinks=true,linkcolor=blue,citecolor=blue,urlcolor=blue]{hyperref}

\usepackage{graphicx}

\renewenvironment{proof}[1][Proof]{\textbf{#1.} }{\ \rule{0.5em}{0.5em}}

\textwidth 155mm
\textheight 226mm

\voffset -1.1cm
\hoffset -1.5cm

\renewenvironment{proof}[1][Proof]{\textbf{#1.} }
{\ \rule{0.5em}{0.5em}}
\newtheorem{theorem}{Theorem}
\newtheorem{prop}{Proposition}
\newtheorem{lemma}{Lemma}

\theoremstyle{definition}

\newtheorem{remark}{Remark}

\newtheorem{problem}{Problem}

\begin{document}

\title
[Extremal problems for convex curves with \dots]
{Extremal problems for convex curves with \\ given relative Chebyshev radius}
\author{Vitor~Balestro, Horst~Martini, Yurii~Nikonorov, Yulia~Nikonorova}

\address{Vitor Balestro \newline
Instituto de Matem\'{a}tica e Estat\'{i}stica \newline
Universidade Federal Fluminense \newline
24210201 Niter\'{o}i \newline
Brazil}
\email{vitorbalestro@id.uff.br}

\address{Horst Martini \newline
Fakult\"{a}t f\"{u}r Mathematik \newline
Technische Universit\"{a}t Chemnitz \newline
09107 Chemnitz \newline
Germany}
\email{martini@mathematik.tu-chemnitz.de}

\address{Yurii Nikonorov \newline
Southern Mathematical Institute of \newline
the Vladikavkaz Scientific Center of \newline
the Russian Academy of Sciences, \newline
Vladikavkaz, Markus st., 22, \newline
362027, Russia}
\email{nikonorov2006@mail.ru}

\address{Yulia Nikonorova\newline
Volgodonsk Engineering Technical Institute the branch \newline
of National Research Nuclear University ``MEPhI'',\newline
Rostov region, Volgodonsk, Lenin st., 73/94, \newline
347360,  Russia}
\email{nikonorova2009@mail.ru}

\begin{abstract}
The paper is devoted to some extremal problems for convex curves and polygons in the Euclidean plane referring to
the relative Chebyshev radius. In particular, we determine the relative Chebyshev radius for an arbitrary triangle. Moreover, we derive
the maximal possible perimeter for  convex curves and convex $n$-gons of a given relative Chebyshev radius.

\vspace{2mm}
\noindent
2010 Mathematical Subject Classification:
52A10, 52A40, 53A04.

\vspace{2mm} \noindent Key words and phrases: approximation by polytopes, convex curve, convex polygon, relative Chebyshev radius.
\end{abstract}

\maketitle

\section{Introduction}

Let $(X,d)$ be a bounded metric space. Let us consider the metric invariant
\begin{equation}\label{chebir1}
\delta(X)=\inf\limits_{p\in X}\, \sup\limits_{q\in X} \,d(p,q)
\end{equation}
arising in approximation theory, where it is called {\it the relative Chebyshev radius of $X$ with respect to $X$ itself},
see, e.g., \cite[p.~119]{Am1986} and \cite{AmZig1980}.
It has also the following obvious geometric sense for compact $X$: $\delta(X)$ is the smallest radius of a ball having its center in $X$ and covering $X$.
For brevity we will call $\delta(X)$ just the relative Chebyshev radius of $X$.

The study of extremal problems for convex curves in the Euclidean plane with a given relative Chebyshev radius started with the paper
\cite{Walter2017} by Rolf~Walter.
In particular, he conjectured that $L(\Gamma)\geq \pi \cdot \delta(\Gamma)$ for any closed convex curve $\Gamma$ in the Euclidean plane,
where $L(\Gamma)$ is the length of $\Gamma$ and $d$ is the standard restricted Euclidean metric.
In~\cite{Walter2017}, this conjecture is proved for the case that
$\Gamma$ is a convex curve of class $C^2$ and all curvature centers of $\Gamma$ lie in the interior of $\Gamma$.
It is also shown that the equality $L(\Gamma)=\pi \cdot \delta(\Gamma)$ in this case
holds if and only if $\gamma$ is of constant width.

It is also proved in \cite{Walter2017} that all $C^2$-smooth convex curves have good approximations by polygons in the sense of the relative Chebyshev radius \eqref{chebir1}.
This observation leads to natural extremal problems for convex polygons.
In particular, the following result given in \cite{Walter2017} holds.

\begin{theorem}[\cite{Walter2017}]\label{theo2wal}
For each triangle  $P$ in the Euclidean plane,  one has
$$
L(P)\geq 2\sqrt{3}\,\cdot \delta(P),
$$
with equality exactly for equilateral triangles.
\end{theorem}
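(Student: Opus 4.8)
The plan is to reduce the relative Chebyshev radius to a finite quantity and then treat three geometric cases. First I would note that for any convex body the farthest point from a given point is an extreme point; since $P=\mathrm{conv}\{A,B,C\}$ and its vertices lie on $\partial P$, for every $p\in\partial P$ one has $\sup_{q\in\partial P}|p-q|=\max\{|pA|,|pB|,|pC|\}=:f(p)$. Hence $\delta(P)=\min_{p\in\partial P}f(p)$, the minimum being attained by compactness. The crucial remark is that $\delta(P)\le f(p_0)$ for \emph{every} $p_0\in\partial P$, so to prove $L(P)\ge 2\sqrt3\,\delta(P)$ it suffices to exhibit, for each triangle, a boundary point $p_0$ with $f(p_0)\le L(P)/(2\sqrt3)$, the equality discussion being carried along in each case. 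Throughout I normalize the labels so that $a\le b\le c$, where $a=|BC|$ and so on, so that $c=|AB|$ is a longest side and $\gamma$ the largest angle, and I write $T$ for the area.

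If $\gamma\ge\pi/2$ (right or obtuse), I would take $p_0$ to be the midpoint of the longest side $AB$. Then $|p_0A|=|p_0B|=c/2$, while the length of the median to $AB$ satisfies $|p_0C|\le c/2$ precisely because $a^2+b^2\le c^2$; thus $f(p_0)=c/2$ and $\delta(P)\le c/2$. Since $a+b>c$, one gets $L(P)=a+b+c>2c>\sqrt3\,c=2\sqrt3\cdot\tfrac c2\ge 2\sqrt3\,\delta(P)$, a strict inequality, and equilateral triangles (being acute) do not occur here.

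The acute case splits according to the smallest angle $\alpha$. If $\alpha\ge\pi/4$, I take $p_0$ to be the foot $H$ of the altitude from $C$ to $AB$. Both base angles being at least $\pi/4$ forces $|HA|=b\cos\alpha\le b\sin\alpha=h_c$ and likewise $|HB|\le h_c$, so the farthest vertex at $H$ is $C$ and $f(H)=h_c=2T/c$. Combining the elementary bound $c\ge L(P)/3$ (a largest side is at least the average) with the triangle isoperimetric inequality $L(P)^2\ge 12\sqrt3\,T$ yields
\[ \delta(P)\le \frac{2T}{c}\le\frac{L(P)^2}{6\sqrt3\,c}\le\frac{L(P)}{2\sqrt3}, \]
and both intermediate inequalities become equalities simultaneously only for the equilateral triangle, which settles the equality case here. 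Finally, for acute triangles with $\alpha<\pi/4$ I would take $p_0$ to be the point of $AB$ equidistant from $A$ and $C$ (the intersection of $AB$ with the perpendicular bisector of $AC$); a short computation, using $b>c\cos\alpha$ for acute triangles, places $p_0$ strictly inside the segment and shows that there $A$ and $C$ are the farthest vertices, so $f(p_0)=b/(2\cos\alpha)$. Via the law of sines the required estimate becomes the trigonometric inequality
\[ \sqrt3\,\sin\beta\le\cos\alpha\,(\sin\alpha+\sin\beta+\sin\gamma),\qquad \gamma=\pi-\alpha-\beta, \]
to be established on the admissible range $0<\alpha<\pi/4$, $\alpha\le\beta$, $\gamma<\pi/2$.

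I expect this last trigonometric inequality to be the main obstacle: unlike the previous cases it resists a one-line estimate. I would attack it by fixing $\alpha$ and minimizing the difference of the two sides over the compact admissible interval of $\beta$, verifying that the minimum stays strictly positive and that equality is only approached at the equilateral corner $\alpha=\beta=\pi/3$, which lies outside this range. Together with the direct computation $\delta=\tfrac{\sqrt3}{2}\cdot(\text{side})=L/(2\sqrt3)$ for the equilateral triangle, this shows that the inequality is strict for every non-equilateral triangle and an equality exactly for the equilateral ones.
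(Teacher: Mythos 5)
Your strategy is sound, and the organizing idea is a real simplification over the paper: you never need the exact value of $\delta(P)$, only the bound $\delta(P)\le f(p_0)$ for a well-chosen boundary test point $p_0$, whereas the paper first establishes the full formula for $\delta(P)$ (its Theorem \ref{theoechrt}, via four lemmas) and only then minimizes $L(P)/\delta(P)$. Your Case 1 matches the paper's. Your Case 2 is genuinely different and more elegant: where the paper writes $L/\delta=f(\beta)+f(\gamma)$ with $f(x)=\frac{1+\cos x}{\sin x}$ and runs a two-stage convexity argument, you get the same conclusion, equality discussion included, from the two standard facts $c\ge L(P)/3$ and $L(P)^2\ge 12\sqrt3\,T$, each an equality only for the equilateral triangle. (For the converse half of the equality statement you do still need the matching lower bound $\delta=\tfrac{\sqrt3}{2}s$ for the equilateral triangle itself; this is immediate since every point of a side is at distance at least the altitude from the opposite vertex, and I take your ``direct computation'' to mean this.)

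The genuine gap is Case 3. The geometric setup there is correct: $p_0$ lies strictly inside $[A,B]$ because $\alpha<\pi/4$, and $B$ is not the farthest vertex because $b>c\cos\alpha$ for acute triangles, so indeed $\delta(P)\le b/(2\cos\alpha)$. But the resulting inequality $\sqrt3\,\sin\beta\le\cos\alpha\,(\sin\alpha+\sin\beta+\sin\gamma)$ on the region $0<\alpha<\pi/4$, $\pi/2-\alpha<\beta\le(\pi-\alpha)/2$ is exactly where the substantial work of the theorem sits, and you stop at a plan rather than a proof. This is the same reduction the paper reaches in its Case 3 (with the roles of largest and smallest angle swapped in the notation), and its resolution there shows what is still required: one checks that the relevant function is strictly decreasing in $\beta$ (the partial derivative computes to $\sin(2\alpha)/(\cos\beta-1)<0$), which pins the minimum at the isosceles boundary $\beta=(\pi-\alpha)/2$; the one-variable problem then reduces, after the substitution $t=\sin(\alpha/2)$, to the concave cubic $g(t)=4(1-2t^2)(1+t)$ on $[0,\sin(\pi/8)]$, whose minimum is attained at an endpoint and exceeds $2\sqrt3$ at both ($g(0)=4$ and $g(\sin(\pi/8))=2\sqrt2+\sqrt{4+2\sqrt2}$). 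So your proposed attack (fix $\alpha$, minimize over $\beta$, verify strict positivity) is the right one and does succeed, and no equality analysis is needed in this case since the equilateral corner is excluded from the region; but as written the hardest third of the argument is asserted, not proved.
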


One of the goals of the present paper is to simplify the proof of this theorem. For this aim we apply the explicit expression for
the value $\delta(P)$ obtained in the following.

\begin{theorem}\label{theoechrt}
Let $P$ be a triangle in the Euclidean plane with side lengths $a\geq b \geq c$ and with angles $\alpha \geq \beta \geq \gamma$.
Then the following formula holds:
$$
\setlength{\jot}{200pt}
\delta(P)=
\begin{cases}
\displaystyle{\frac{a}{2}} & {for} \, \alpha \geq \pi/2,
\vspace{1mm}
\\
\vspace{1mm}
b\sin(\gamma) & \mbox{for}\,\,\, \gamma \geq \pi/4, \\
\displaystyle{\frac{b}{2\cos(\gamma)}} &\mbox{for}\,\, \gamma \leq \pi/4 \mbox{  and  } \alpha \leq \pi/2\,.
\end{cases}
$$
\end{theorem}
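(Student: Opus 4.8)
The plan is to reduce the computation of $\delta(P)$ to a constrained smallest-enclosing-disk problem and then to analyze how a growing intersection of disks first reaches the boundary of the triangle. Write $A,B,C$ for the vertices opposite to the sides $a,b,c$, so that $a=|BC|$, $b=|CA|$, $c=|AB|$ and the angles at $A,B,C$ are $\alpha,\beta,\gamma$. Since the distance function $q\mapsto |pq|$ is convex, its maximum over the solid triangle is attained at a vertex; hence for every $p$ on the boundary curve $\partial P$ one has $\sup_{q\in\partial P}|pq|=\max\{|pA|,|pB|,|pC|\}=:f(p)$, so that
$$
\delta(P)=\min_{p\in\partial P} f(p).
$$
The function $f$ is convex, and $\{f\le t\}=\overline D(A,t)\cap\overline D(B,t)\cap\overline D(C,t)$ is an intersection of three closed disks of radius $t$. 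Thus $\delta(P)$ is the least $t$ for which this region meets $\partial P$, and I will track how the region, which shrinks to the smallest-enclosing-disk centre as $t$ decreases, first touches an edge.

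First I would dispose of the case $\alpha\ge\pi/2$. Here the smallest disk enclosing $\{A,B,C\}$ is the one having the longest side $BC$ as a diameter: its centre is the midpoint $M$ of $BC$ and its radius is $a/2$, while $A$ lies on or inside it because $\angle BAC=\alpha\ge\pi/2$. Since $M\in\partial P$, the unconstrained minimum of $f$ is already attained on the boundary, giving $\delta(P)=f(M)=a/2$.

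For acute triangles ($\alpha<\pi/2$) the circumcentre lies in the interior, so the minimum of $f$ on $\partial P$ exceeds the circumradius and is attained in the relative interior of an edge. Restricting $f$ to the edge $BC$ and writing $x=|Cp|$, I have $|pC|=x$, $|pB|=a-x$ and, by the law of cosines, $|pA|^2=x^2-2bx\cos\gamma+b^2$, so $|pA|$ is minimized at the foot of the altitude $x=b\cos\gamma$, with value $h_a=b\sin\gamma=c\sin\beta$. Two mechanisms can produce the edge minimum. If $\gamma\ge\pi/4$, then at the altitude foot one checks $|pC|=b\cos\gamma\le b\sin\gamma=h_a$ and $|pB|=c\cos\beta\le c\sin\beta=h_a$ (using $\beta\ge\gamma\ge\pi/4$), so $f\ge|pA|\ge h_a$ with equality there; this gives the value $b\sin\gamma$ of Case 2, the circle about $A$ being tangent to $BC$. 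If instead $\gamma\le\pi/4$, the vertex $C$ is the farthest at the altitude foot, and the minimum occurs where $|pA|=|pC|$, i.e. on the perpendicular bisector of $AC$; solving $x^2=x^2-2bx\cos\gamma+b^2$ gives $x=\tfrac{b}{2\cos\gamma}$ and $f=\tfrac{b}{2\cos\gamma}$. This is the value of Case 3, and it is genuinely the edge minimum precisely when $B$ is not farther, i.e. $|pB|=a-\tfrac{b}{2\cos\gamma}\le\tfrac{b}{2\cos\gamma}$; by the law of sines this inequality is equivalent to $\sin\alpha\cos\gamma\le\sin(\alpha+\gamma)$, that is $\cos\alpha\ge0$, matching the hypothesis $\alpha\le\pi/2$.

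The main obstacle is to show that among the three edges it is the longest side $BC$ that realizes the global minimum over $\partial P$, so that the edge computation above actually equals $\delta(P)$. I would argue this from the orderings $a\ge b\ge c$ and $\alpha\ge\beta\ge\gamma$: performing the same analysis on the edges $CA$ and $AB$ yields, in the tangency regime, the altitudes $h_b$ and $h_c$, and since $h_a\le h_b\le h_c$ the disk about $A$ reaches its opposite edge first; an analogous monotonicity handles the corner regime, the perpendicular bisector of the relevant shorter segment meeting the longest side at the smallest radius. Finally I would verify consistency on the interfaces between the cases: at $\gamma=\pi/4$ one has $b\sin\gamma=\tfrac{b}{2\cos\gamma}$, and at $\alpha=\pi/2$ one has $\cos\gamma=b/a$, whence $\tfrac{b}{2\cos\gamma}=a/2$, so the three formulas agree wherever the regions overlap. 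Assembling the three cases then yields the stated expression for $\delta(P)$.
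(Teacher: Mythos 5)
Your overall strategy coincides with the paper's: reduce $\delta(P)$ to minimizing $f(p)=\max\{d(p,A),d(p,B),d(p,C)\}$ over the three edges, dispose of $\alpha\ge\pi/2$ via the disk with diameter $BC$, and on each remaining edge distinguish the ``tangency'' regime (minimum at the altitude foot, value the altitude length --- this is exactly the paper's Lemma~\ref{lem1}) from the ``corner'' regime (minimum on the perpendicular bisector of the segment joining the two competing vertices, value $\tfrac{b}{2\cos\gamma}$ --- exactly the computation in the paper's Lemma~\ref{lem4}). Those local computations are correct, as is your boundary check $a-\tfrac{b}{2\cos\gamma}\le\tfrac{b}{2\cos\gamma}\iff\alpha\le\pi/2$.

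However, the step where you compare the three edge minima has a gap in the case $\gamma\le\pi/4$, $\alpha\le\pi/2$. There the edges $BC$ and $CA$ are both in the corner regime at the vertex $C$ (values $\tfrac{b}{2\cos\gamma}$ and $\tfrac{a}{2\cos\gamma}$, and $a\ge b$ does settle that comparison), but the edge $AB$ is in the \emph{tangency} regime, since both adjacent angles satisfy $\alpha\ge\pi/3$ and $\beta=\pi-\alpha-\gamma\ge\pi/4$; its minimum is $h_c=b\sin\alpha$. So you must compare a corner-regime value on one edge with a tangency-regime value on another, i.e. prove $b\sin\alpha\ge\tfrac{b}{2\cos\gamma}$, equivalently $2\sin\alpha\cos\gamma\ge 1$. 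This does not follow from the orderings $a\ge b\ge c$ or $h_a\le h_b\le h_c$, nor from ``the perpendicular bisector meeting the longest side at the smallest radius''; it is a genuine cross-regime inequality that your appeal to monotonicity does not cover. It is true --- $\alpha\ge\pi/3$ gives $\sin\alpha\ge\sqrt{3}/2$ and $\gamma\le\pi/4$ gives $\cos\gamma\ge 1/\sqrt{2}$, whence $2\sin\alpha\cos\gamma\ge\sqrt{3/2}>1$ --- and the paper proves exactly this estimate at the end of Lemma~\ref{lem4}. With that inequality supplied, your argument is complete and essentially identical to the paper's proof.
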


This theorem is proved in Section \ref{sect.2} of the paper. In the same section, we give a new proof of Theorem \ref{theo2wal}.
We also note that in Section \ref{sect.1} we fix the notation and consider several auxiliary results.
Section \ref{sect.3} is devoted to convex curves (polygons) of maximal perimeter among
all curves (among all polygons with a given number of vertices) with a fixed value of
the relative Chebyshev radius \eqref{chebir1}. The corresponding results are contained in
Theorem \ref{maxgeneral} and Theorem \ref{maxpolygon}.
Finally, we briefly discuss some related unsolved problems.

\smallskip

\section{Notation and some auxiliary results}\label{sect.1}

We identify the Euclidean plane with $\mathbb{R}^2$ supplied with the standard Euclidean metric~$d$, where $d(x,y)=\sqrt{(x_1-y_1)^2+(x_2-y_2)^2}$.

We call $\Gamma$ {\it a convex curve} if it is the boundary of some convex compact set in the Euclidean plane $\mathbb{R}^2$.
Important examples of convex curves are {\it  convex polygons {\rm(}convex closed polygonal chains}).
A polygon $P$ is called and $n$-gon if it has exactly $n$ vertices.
For $n=2$ we get line segments. The perimeter $L(P)$ of any $2$-gon
is defined as the double length of the line segment $P$. Such definition is justified,
since it leads to the continuity of the perimeter as a functional on the set of convex polygons with respect to the Hausdorff distance.

Let $\Gamma \subset \mathbb{R}^2$ be a compact set (in particular, a convex curve). We define the function $\mu:\Gamma \rightarrow \mathbb{R}$ as follows:
\begin{equation}\label{mufunk}
\mu (x)=\max\limits_{y \in \Gamma} d(x,y).
\end{equation}

If a point $x\in \Gamma$ is such that $\mu(x)=\min\limits_{y \in \Gamma} \mu(y)=\delta(\Gamma)$ (see \eqref{chebir1}),
then we call it {\it extremal} (for $\delta(\Gamma)$), whereas any point
$x_0\in \Gamma$ with $d(x,x_0)=\mu(x)=\max\limits_{y \in \Gamma} d(x,y)$ is called
{\it a footpoint} (for~$x$) and the corresponding chord $[x,x_0]$ is called {\it distinguished}.
\smallskip

For any polygon $P$ and any given point~$x\in P$,  $\max\limits_{y \in P} d(x,y)$ is achieved at a vertex of $P$ (see, e.g., Lemma 4.1 in  \cite{Walter2017}),
i.e., $\mu(x)$ is equal to the maximal distance from $x$ to vertices of $P$.
Note also that the diameter $D:=\max\limits_{x,y \in P} d(x,y)$  of a polygon $P$ always connects two vertices.

\smallskip

The following property (monotonicity of the perimeter) of convex curves is well known (see, e.g., \cite[\S 7]{BoFe1987}).

\begin{prop}\label{monotper}
If a convex curve $\Gamma_1$ is inside another convex curve $\Gamma_2$ in the Euclidean plane,
then the perimeter of $\Gamma_1$ is less or equal to the perimeter of $\Gamma_2$, and equality holds if and only if $\Gamma_1=\Gamma_2$.
\end{prop}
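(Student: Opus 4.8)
The plan is to pass from the curves to the convex bodies they bound and to control the perimeter by means of the support function. Denote by $K_1 \subseteq K_2$ the convex compact sets with $\partial K_i = \Gamma_i$, and for a convex body $K$ let $h_K(\theta) = \max_{x \in K}\langle x, (\cos\theta, \sin\theta)\rangle$ be its support function. The starting point is the classical integral representation of the perimeter (Cauchy's formula),
$$
L(\Gamma) = \int_0^{2\pi} h_K(\theta)\, d\theta,
$$
valid for every planar convex body $K$ with boundary $\Gamma$; for $C^2$ curves it follows at once from the identity $\rho(\theta)=h_K(\theta)+h_K''(\theta)$ for the radius of curvature together with $\int_0^{2\pi} h_K''\,d\theta = 0$, and the general case is recovered by approximation. (As a sanity check it also reproduces the paper's convention for $2$-gons: a segment of length $2a$ has $\int_0^{2\pi} a\lvert\cos\theta\rvert\,d\theta = 4a$, i.e. twice the length.)

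First I would observe that the inclusion $K_1 \subseteq K_2$ is equivalent to the pointwise inequality $h_{K_1}(\theta) \le h_{K_2}(\theta)$ for every $\theta$, since the supporting half-plane of $K_2$ in direction $\theta$ also contains $K_1$. Integrating this over $[0,2\pi]$ and invoking the formula above gives immediately
$$
L(\Gamma_1) = \int_0^{2\pi} h_{K_1}(\theta)\, d\theta \le \int_0^{2\pi} h_{K_2}(\theta)\, d\theta = L(\Gamma_2),
$$
which is the asserted inequality.

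For the equality case, suppose $L(\Gamma_1)=L(\Gamma_2)$. Then $\int_0^{2\pi}\bigl(h_{K_2}(\theta)-h_{K_1}(\theta)\bigr)\, d\theta = 0$, while the integrand is continuous and nonnegative; hence $h_{K_1}(\theta)=h_{K_2}(\theta)$ for all $\theta$. Since a convex body is determined by its support function, this forces $K_1=K_2$ and therefore $\Gamma_1=\Gamma_2$. The converse implication is trivial.

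The only delicate point — and the step I expect to be the main obstacle — is the justification of Cauchy's formula for bodies that are merely convex, in particular for polygons, whose support function fails to be differentiable at finitely many directions. Two routes handle this. One interprets the formula via the surface area measure or as a limit of smooth approximations, using that both sides are continuous with respect to the Hausdorff distance, a continuity already invoked elsewhere in the paper. The other avoids integral geometry entirely: cutting a convex body by a closed half-plane $H$ replaces a boundary arc by the chord it subtends, and since a convex arc is never shorter than its chord, the operation does not increase the perimeter, with equality exactly when the arc already coincides with the chord. Writing $K_1$ as an intersection of its own supporting half-planes and applying this cut repeatedly — finitely many times in the polygonal case, and through a Hausdorff limit in general — yields both the inequality and the equality characterization without any smoothness assumption.
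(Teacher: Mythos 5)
Your proof is correct. Note, however, that the paper does not actually prove this proposition: it is stated as a well-known fact with a reference to Bonnesen--Fenchel, \S 7, so there is no ``paper's proof'' to match. Your support-function argument is a clean, self-contained substitute: the Cauchy formula $L(\Gamma)=\int_0^{2\pi}h_K(\theta)\,d\theta$, the monotonicity $K_1\subseteq K_2\Rightarrow h_{K_1}\le h_{K_2}$, and the fact that a continuous nonnegative function with zero integral vanishes (plus the injectivity of $K\mapsto h_K$) together give both the inequality and the equality case, and your check that the formula reproduces the doubled-length convention for $2$-gons is a worthwhile detail, since the paper relies on that convention. You also correctly identify the one delicate point, the validity of Cauchy's formula for merely convex (e.g.\ polygonal) boundaries, and both remedies you offer are sound: Hausdorff-continuity of perimeter and of the support-function integral handles the general case by smooth approximation, while the half-plane-cutting argument (a convex arc is at least as long as its chord, strictly unless it is the chord) is the more elementary route and is essentially the classical proof in Bonnesen--Fenchel; it also yields the equality characterization directly, since $K_1\subsetneq K_2$ admits a separating supporting half-plane whose cut strictly decreases the perimeter. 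Either route is acceptable; no gap.
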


The following simple result is very useful.

\begin{prop}\label{semcircle}
Let $\Gamma$ be a convex curve in $\mathbb{R}^2$ such that $\Gamma$ contains the line segment $[p,q]$ with the property
$\Gamma \subset \left\{ x\in \mathbb{R}^2\,|\, d(x,o)\leq \frac{1}{2}d(p,q) \right\}$, where $o$ is the  midpoint of $[p,q]$.
Then $\delta(\Gamma)= \frac{1}{2}d(p,q)=\mu(o)$. Moreover, $o$ is a unique extremal point for $\delta(\Gamma)$.
\end{prop}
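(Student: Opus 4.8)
The plan is to establish the two easy bounds $\delta(\Gamma)\le \mu(o)$ and $\mu(o)=\tfrac12 d(p,q)$ first, and then to prove the matching lower bound $\mu(x)\ge \tfrac12 d(p,q)$ for \emph{every} $x\in\Gamma$, with strict inequality unless $x=o$. This simultaneously pins down the value of $\delta(\Gamma)$ and yields the uniqueness of the extremal point.

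First I would note that since $[p,q]\subset\Gamma$, the midpoint $o$ lies on $\Gamma$ and is thus a legitimate candidate for an extremal point, while the endpoints $p,q$ belong to $\Gamma$ as well. Writing $r:=\tfrac12 d(p,q)=d(o,p)=d(o,q)$, the containment hypothesis $\Gamma\subset\{x\in\mathbb{R}^2\,|\,d(x,o)\le r\}$ gives, via the definition \eqref{mufunk}, that $\mu(o)=\max_{y\in\Gamma}d(o,y)\le r$; on the other hand $p\in\Gamma$ with $d(o,p)=r$ forces $\mu(o)\ge r$. Hence $\mu(o)=r$, and consequently $\delta(\Gamma)=\inf_{x\in\Gamma}\mu(x)\le\mu(o)=r$.

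The core of the argument is the reverse inequality. For an arbitrary $x\in\Gamma$, since $p,q\in\Gamma$ we have $\mu(x)\ge\max\{d(x,p),d(x,q)\}$, and then the elementary estimate
\[
\max\{d(x,p),d(x,q)\}\ge\tfrac12\bigl(d(x,p)+d(x,q)\bigr)\ge\tfrac12 d(p,q)=r,
\]
whose last step is the triangle inequality applied to the triple $p,x,q$, shows $\mu(x)\ge r$ for all $x\in\Gamma$ and therefore $\delta(\Gamma)\ge r$. Combined with the previous paragraph, this proves $\delta(\Gamma)=r=\mu(o)$.

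Finally, uniqueness follows by tracking the equality cases. If $x\in\Gamma$ is extremal, then $r=\mu(x)\ge\max\{d(x,p),d(x,q)\}\ge r$, so both inequalities in the display above must be equalities. Equality in $\max\{d(x,p),d(x,q)\}=\tfrac12(d(x,p)+d(x,q))$ forces $d(x,p)=d(x,q)$, while equality in the triangle inequality $d(x,p)+d(x,q)=d(p,q)$ forces $x$ to lie on the segment $[p,q]$; the unique point meeting both conditions is the midpoint $o$. The only mild subtlety I anticipate is making this equality analysis fully airtight, i.e.\ confirming that the perpendicular bisector of $[p,q]$ and the chord $[p,q]$ genuinely meet only at $o$; everything else reduces to the triangle inequality and the containment hypothesis.
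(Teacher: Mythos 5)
Your proposal is correct and follows essentially the same route as the paper: compute $\mu(o)=\tfrac12 d(p,q)$ from the containment hypothesis together with $p,q\in\Gamma$, obtain the lower bound $\mu(x)\ge\max\{d(x,p),d(x,q)\}\ge\tfrac12 d(p,q)$ from the triangle inequality, and settle uniqueness by the equality analysis of that chain. The paper states the uniqueness step more tersely, but the underlying argument is identical to yours.
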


\begin{proof} Since $\mu(o)=\max\limits_{x \in \Gamma} d(o,x) \leq \frac{1}{2}d(p,q)$ and $d(o,p)=d(o,q)=\frac{1}{2}d(p,q)$, then
$\mu(o)=\frac{1}{2}d(p,q)$. On the other hand, for any $x \in \Gamma$ we have $d(x,p)+d(x,q) \geq d(p,q)$, and therefore,
$\mu(x)=\max\limits_{y \in \Gamma} d(x,y) \geq \max \{d(x,p),d(x,q)\}\geq \frac{1}{2}d(p,q)$.
Hence, $o$ is an extremal point for $\delta(\Gamma)$ and $\delta(\Gamma)=\mu(o)= \frac{1}{2}d(p,q)$.
It is also clear that $o$ is a unique extremal point, since $\max \{d(x,p),d(x,q)\}> \frac{1}{2}d(p,q)$ for any $x \neq o$.
\end{proof}

\section{The relative Chebyshev radius for triangles}\label{sect.2}

In this section we deal with triangles in Euclidean plane $\mathbb{R}^2$.

\begin{lemma}\label{lem1}
Let $P$ be a triangle $KLM$ such that $\angle KLM \geq \pi/4$ and $\angle LKM \geq \pi/4$.
Then the minimal value of $\mu(x)=\max\limits_{y \in P} d(x,y)$ for $x$ lying in the line segment $[K,L]$
is reached exactly at the point $N\in [K,L]$, such that the line $MN$ is orthogonal to the line $KL$, and $d(M,N)$ is equal to
the length of the altitude of the triangle $KLM$ through the vertex $M$.
\end{lemma}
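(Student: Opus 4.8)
The plan is to reduce the one–variable minimization to an explicit comparison of three elementary functions. For a point $x$ moving along $[K,L]$, recall (as noted above, since $P$ is a polygon the maximum of $d(x,\cdot)$ over $P$ is attained at a vertex) that
\[
\mu(x)=\max\{d(x,K),\,d(x,L),\,d(x,M)\}.
\]
I would introduce the foot $N$ of the perpendicular from $M$ to the line $KL$ and set $h:=d(M,N)$, the length of the altitude through $M$. Since $N$ is the closest point of the line $KL$ to $M$, Pythagoras gives $d(x,M)=\sqrt{h^2+d(x,N)^2}$ for every $x$ on the line. The point of this decomposition is that the term $d(x,M)$ alone already forces any candidate minimizer to be $N$: for $x\neq N$ one has $d(x,M)>h$, hence $\mu(x)\geq d(x,M)>h$.

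The crux is therefore to show $\mu(N)=h$, i.e.\ that at the foot $N$ the distance to $M$ dominates the distances to the other two vertices. In the right triangle $KNM$ (right angle at $N$, angle $\angle LKM$ at $K$) I would compute $d(K,N)=d(K,M)\cos(\angle LKM)$ and $h=d(M,N)=d(K,M)\sin(\angle LKM)$, so that $d(K,N)\leq h$ is equivalent to $\cos(\angle LKM)\leq\sin(\angle LKM)$, which holds precisely because $\angle LKM\geq\pi/4$. The symmetric computation from the vertex $L$ gives $d(L,N)\leq h$ from $\angle KLM\geq\pi/4$. Consequently $\mu(N)=\max\{d(K,N),d(L,N),h\}=h$. (These same computations, with the base angles acute, also place $N$ in the interior of $[K,L]$, so that $N$ is an admissible competitor.)

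Combining the two steps finishes the argument: $\mu(N)=h$, while $\mu(x)\geq d(x,M)=\sqrt{h^2+d(x,N)^2}>h$ for every $x\in[K,L]$ with $x\neq N$. Hence the minimum of $\mu$ on $[K,L]$ equals $h=d(M,N)$ and is attained exactly at $N$. I expect the only genuinely delicate point to be the comparison $\sin(\angle LKM)\geq\cos(\angle LKM)$ (and its analogue at $L$): this is exactly the place where the hypotheses $\angle LKM,\angle KLM\geq\pi/4$ are used, and it is what guarantees that the altitude foot—rather than an endpoint of the segment—is the minimizer. Everything else reduces to the Pythagorean identity and is routine.
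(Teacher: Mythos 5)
Your argument is correct and follows essentially the same route as the paper's proof: first show that the angle hypotheses force $d(N,K),d(N,L)\leq d(M,N)$ so that $\mu(N)=d(M,N)$, then observe that $d(x,M)>d(M,N)$ for every other $x\in[K,L]$. The only cosmetic difference is that you verify the key inequalities via the explicit relations $d(K,N)=d(K,M)\cos(\angle LKM)$, $h=d(K,M)\sin(\angle LKM)$, whereas the paper compares the angles $\angle NKM\geq\angle KMN$ in the right triangle and invokes the side--angle comparison.
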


\begin{proof}
It is clear that $d(M,N) \geq d(N,K)$ and $d(M,N) \geq d(N,L)$ (note that $\angle NLM \geq \pi/2-\angle NLM=\angle LMN$ and
$\angle NKM \geq \pi/2-\angle NKM=\angle KMN$), see Fig. \ref{Fig1}\,a).
Therefore,
$$
\mu(N)=\max\{d(N,K), d(N,L), d(N,M)\}=d(N,M).
$$
If $x\in [K,L]$ and $x\neq N$, then $d(M,x) >  d(M,N)$ and, therefore,
$$
\mu(x)=\max\{d(x,K), d(x,L), d(x,M)\}> d(M,N),
$$
as required.
\end{proof}
\smallskip

Let $ABC$ be a triangle with vertices $A,B,C$. We put $a=d(B,C)$, $b=d(A,C)$, $c=d(A,B)$, where $a\geq b \geq c$,
and $\alpha=\angle CAB$, $\beta=\angle ABC$, $\gamma =\angle BCA$. It is clear that $\alpha \geq \beta \geq \gamma$.

We are going to calculate the value of $\delta(P)$.

\begin{lemma}\label{lem2}
For a triangle $P$ with $\alpha \geq \pi/2$, we have $\delta(P)=a/2$ with a unique extremal point $o$, the midpoint of $[B,C]$.
\end{lemma}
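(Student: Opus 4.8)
The plan is to reduce the statement to Proposition~\ref{semcircle} by taking $p=B$ and $q=C$. Then the midpoint of $[p,q]$ is exactly $o$, the midpoint of $[B,C]$, and $\tfrac12 d(p,q)=a/2$. Since $[B,C]$ is a side of the triangle, it is contained in the convex curve $P$, so the only hypothesis of Proposition~\ref{semcircle} that needs checking is the inclusion $P\subset\{x\in\mathbb{R}^2 : d(x,o)\le a/2\}$. Once this is verified, Proposition~\ref{semcircle} yields at once both $\delta(P)=a/2$ and the uniqueness of $o$ as an extremal point, which is the full content of the lemma.

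To check the inclusion, note that the closed disk $\{x : d(x,o)\le a/2\}$ is convex and $P$ bounds the triangle $ABC$, which is the convex hull of $A,B,C$; hence it suffices to show that the three vertices lie in this disk (this is also consistent with the earlier remark that $\mu$ is attained at a vertex of $P$). For $B$ and $C$ this is immediate, as $d(o,B)=d(o,C)=a/2$. Thus everything comes down to the single inequality $d(o,A)\le a/2$.

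This last inequality is the main (and essentially only) point. I would argue geometrically: the boundary circle of the disk is the circle having $[B,C]$ as a diameter, and by the inscribed-angle theorem (Thales) a point sees the diameter $[B,C]$ under a right angle precisely when it lies on this circle, under an obtuse angle when it lies strictly inside, and under an acute angle when it lies outside. Since $\angle BAC=\alpha\ge\pi/2$, the vertex $A$ lies in the closed disk, i.e.\ $d(o,A)\le a/2$, with equality exactly when $\alpha=\pi/2$. Equivalently, one may compute the median length $d(o,A)^2=\tfrac14(2b^2+2c^2-a^2)$ and combine it with the law of cosines $a^2=b^2+c^2-2bc\cos\alpha$ to see that $d(o,A)\le a/2$ is equivalent to $\cos\alpha\le 0$. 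With the inclusion established, the result follows from Proposition~\ref{semcircle}. No serious obstacle arises here; the only thing to get right is the equivalence between $\alpha\ge\pi/2$ and $A$ lying in the disk with diameter $[B,C]$.
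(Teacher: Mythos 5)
Your proposal is correct and follows exactly the paper's own route: the paper also deduces the lemma by noting that $\alpha\ge\pi/2$ forces $P\subset\{x: d(x,o)\le a/2\}$ and then invoking Proposition~\ref{semcircle} with $p=B$, $q=C$. You merely spell out (via Thales or the median-length formula) the inclusion that the paper states without further justification.
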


\begin{proof}
Since $\alpha \geq \pi/2$, we have
$P \subset \left\{ x\in \mathbb{R}^2\,|\, d(x,o)\leq a/2 \right\}$, where $o$ is the  midpoint of $[B,C]$.
Now it suffices to apply Proposition \ref{semcircle}.
\end{proof}

\begin{lemma}\label{lem3}
For a triangle $P$ with $\gamma \geq \pi/4$, the value $\delta(P)$ is equal to $b\sin(\gamma)=c\sin(\beta)$,
the length of the altitude of $P$ through the vertex $A$.
\end{lemma}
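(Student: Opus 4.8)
The plan is to obtain $\delta(P)$ by squeezing it between matching upper and lower bounds, the common value being the altitude through $A$. Recall that $\delta(P)=\inf_x \mu(x)$, where $x$ ranges over the three sides of the triangle $P=ABC$. I would first record the structural consequence of the hypothesis: since $\beta\geq\gamma\geq\pi/4$, we get $\beta+\gamma\geq\pi/2$, hence $\alpha=\pi-\beta-\gamma\leq\pi/2$, so all three angles lie in $[\pi/4,\pi/2]$ and $P$ is non-obtuse. Let $N$ be the foot of the altitude from $A$ onto the line $BC$; non-obtuseness guarantees $N\in[B,C]$, and from the right triangles $ANB$ and $ANC$ one reads off $d(A,N)=c\sin(\beta)=b\sin(\gamma)=:h_a$, the altitude through $A$.

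For the upper bound I would apply Lemma \ref{lem1} with $K=B$, $L=C$, $M=A$. The angle hypotheses $\angle KLM=\gamma\geq\pi/4$ and $\angle LKM=\beta\geq\gamma\geq\pi/4$ are exactly what is available, so the minimum of $\mu$ over the segment $[B,C]$ is attained at $N$ and equals $d(A,N)=h_a$; in particular $\delta(P)\leq\mu(N)=h_a$. Concretely, this uses $d(N,B)=c\cos(\beta)\leq c\sin(\beta)=h_a$ and $d(N,C)=b\cos(\gamma)\leq b\sin(\gamma)=h_a$, both valid precisely because $\beta,\gamma\geq\pi/4$.

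For the lower bound I would estimate $\mu$ on each side separately via the elementary fact that the distance from a point to any point of a line is at least the perpendicular (altitude) distance to that line. Writing $S$ for the area of $P$ and $h_a=2S/a$, $h_b=2S/b$, $h_c=2S/c$ for the three altitudes, I get: for $x\in[B,C]$, $\mu(x)\geq d(x,A)\geq h_a$; for $x\in[A,B]$, $\mu(x)\geq d(x,C)\geq h_c$; and for $x\in[A,C]$, $\mu(x)\geq d(x,B)\geq h_b$. Since $a\geq b\geq c$ forces $h_a\leq h_b\leq h_c$, every boundary point satisfies $\mu(x)\geq h_a$, whence $\delta(P)\geq h_a$. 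Combining the two bounds yields $\delta(P)=h_a=b\sin(\gamma)=c\sin(\beta)$, with $N$ an extremal point.

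The only genuine use of the hypothesis is in the upper bound, where $\mu(N)=h_a$ needs $\beta,\gamma\geq\pi/4$: if $\gamma<\pi/4$ the footpoint of $N$ would be $C$ rather than $A$, pushing $\mu(N)$ above $h_a$ and triggering the third case of Theorem \ref{theoechrt}. I expect no real difficulty with the upper bound; the part requiring care is the lower bound, which must be uniform across all three sides. The point making it work is the altitude comparison $h_a\leq h_b\leq h_c$, driven by $a\geq b\geq c$: it is what turns the per-side opposite-vertex estimates into a correct \emph{global} minimum at $N$, rather than just a side-wise minimum.
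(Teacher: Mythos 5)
Your proof is correct, and its skeleton coincides with the paper's: both reduce $\delta(P)$ to the minima of $\mu$ over the three sides and identify the answer as the altitude through $A$, attained at its foot $N$, using Lemma \ref{lem1} on the side $[B,C]$ (which is legitimate here since $\gamma\geq\pi/4$ forces all three angles into $[\pi/4,\pi/2]$). The one genuine difference is in how the other two sides are handled. The paper applies Lemma \ref{lem1} three times, computing the exact side-wise minima $M_1,M_2,M_3$ as the three altitudes and then taking the smallest, which is $h_a$ because $a\geq b\geq c$. You instead compute the exact value only on $[B,C]$ (the upper bound $\delta(P)\leq\mu(N)=h_a$) and replace the exact computation on $[A,B]$ and $[A,C]$ by the one-line perpendicular-distance estimate $\mu(x)\geq d(x,C)\geq h_c$ (resp.\ $\geq h_b$), combined with $h_a\leq h_b\leq h_c$. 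This buys a small economy: the lower bound needs no angle hypotheses at all on those two sides, whereas the paper's route needs the (available, but not free) fact that Lemma \ref{lem1} is applicable to each side. The paper's version yields slightly more information --- the exact minimum of $\mu$ on every side --- but for the statement of the lemma your squeeze is sufficient and marginally leaner.
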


\begin{proof}
It is clear that $\min\limits_{x\in P} \mu(x)=\min \{M_1, M_2, M_3\}$,
where $M_1:=\min\limits_{x\in [A,B]} \mu(x)$, $M_2:=\min\limits_{x\in [A,C]} \mu(x)$, and $M_3:=\min\limits_{x\in [B,C]} \mu(x)$ (see \eqref{mufunk}).
By Lemma \ref{lem1}, $M_1$, $M_2$, $M_3$ are equal to the length of the altitude of the triangle $P$ through the vertices $C$, $B$, $A$ respectively.
Now it is clear (recall that $a\geq b \geq c$) that $\delta(P)=\min\limits_{x\in P} \mu(x)=M_3$, the length of the altitude of $P$ through the vertex $A$.
\end{proof}

\begin{remark}\label{rem1}
If $\gamma \geq \pi/4$, then $\alpha \geq \beta \geq \gamma \geq \pi/4$ and $\alpha \leq \pi -\beta-\gamma \leq \pi/2$.
Moreover, $\alpha=\pi/2$ if and only if $\beta=\gamma=\pi/4$.
\end{remark}

\begin{figure}[t]
\begin{minipage}[h]{0.35\textwidth}
\center{\includegraphics[width=0.99\textwidth]{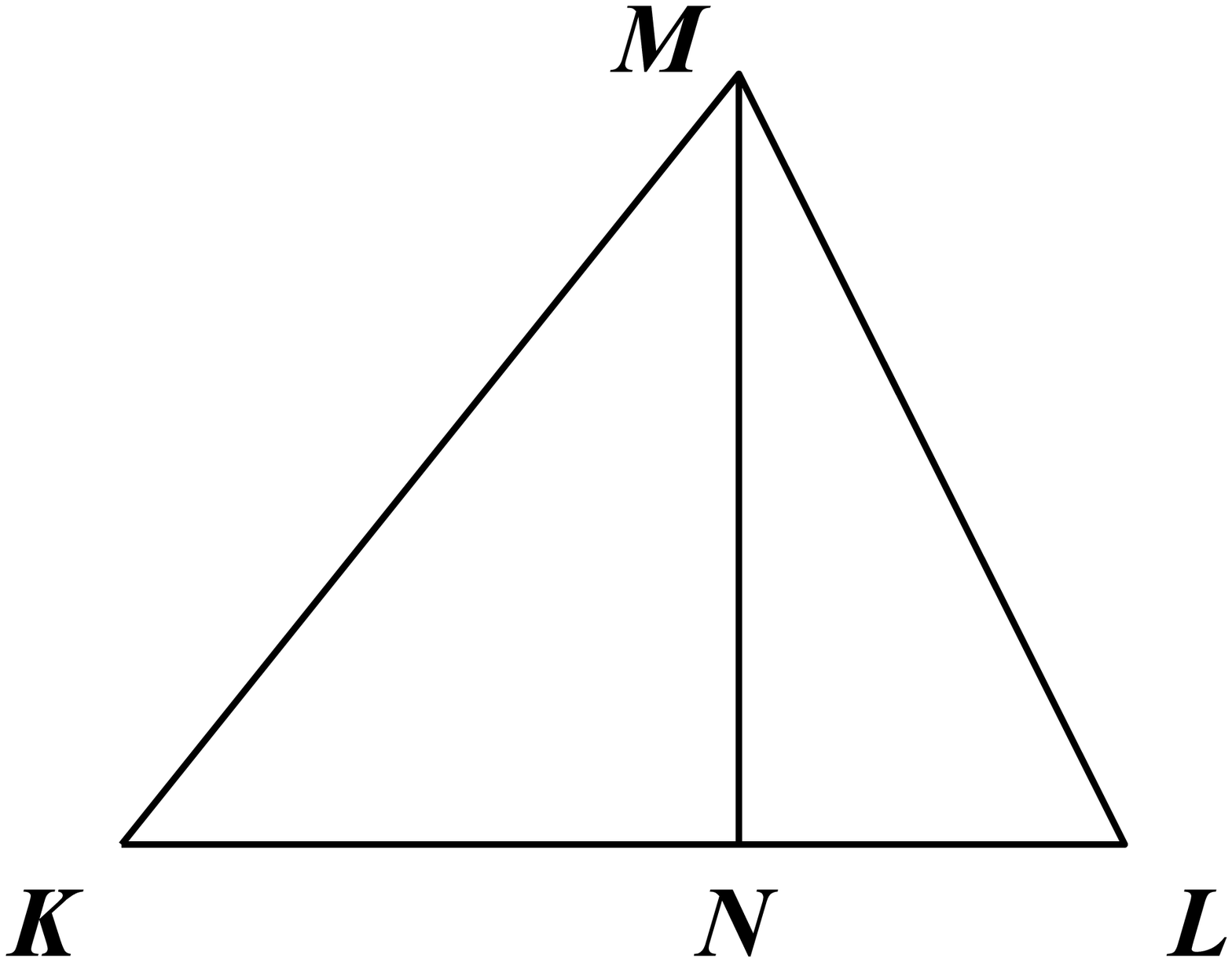}}    a) \\
\end{minipage}
\quad\quad
\begin{minipage}[h]{0.45\textwidth}
\center{\includegraphics[width=0.99\textwidth]{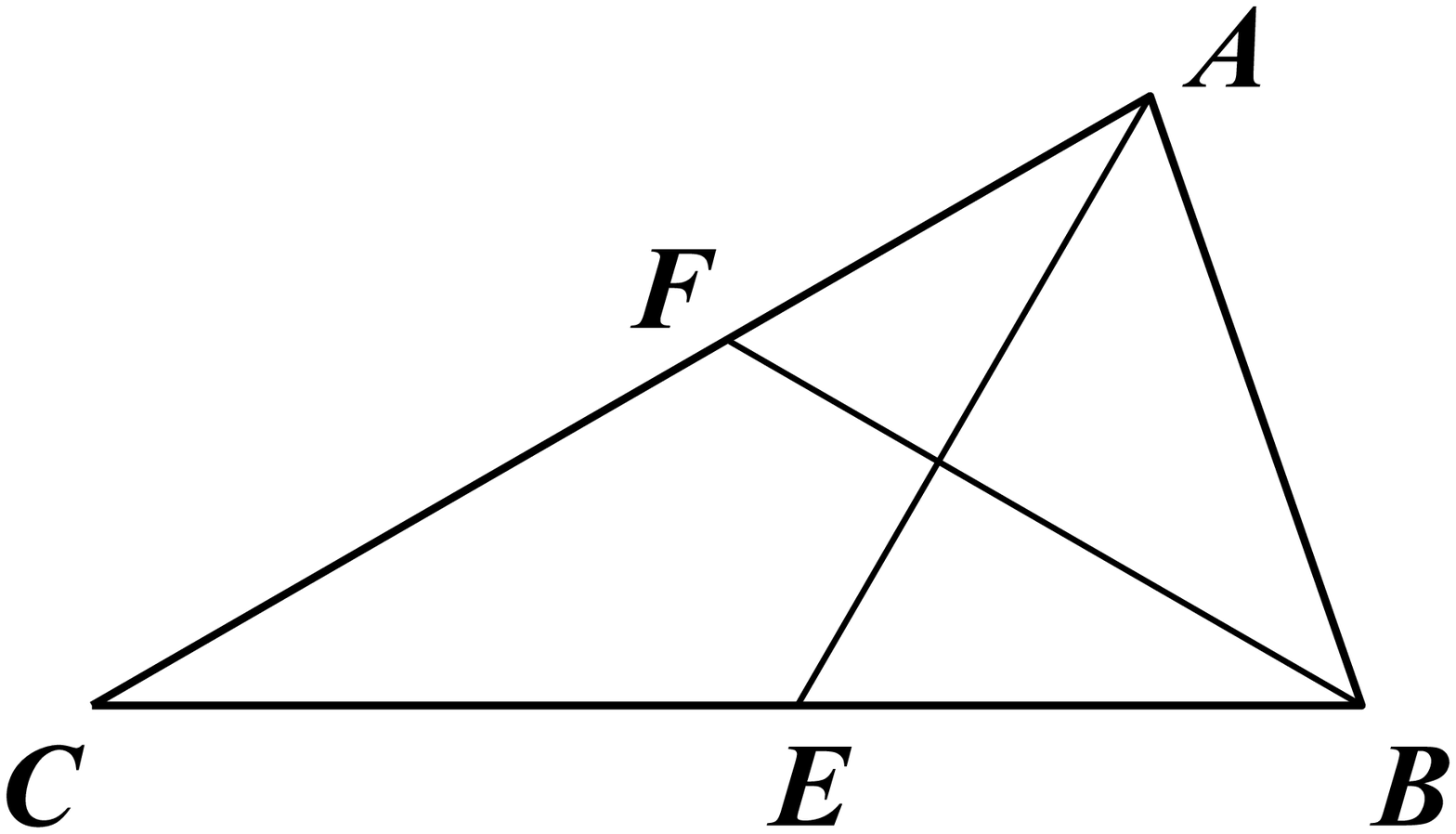}} b) \\
\end{minipage}
\caption{The pictures for: a) Lemma \ref{lem1}; b) Lemma \ref{lem4}.}
\label{Fig1}
\end{figure}

\begin{lemma}\label{lem4}
For a triangle $P$ with $\gamma \leq \pi/4$ and $\alpha \leq \pi/2$, we have $\delta(P)=\frac{b}{2\cos(\gamma)}$.
\end{lemma}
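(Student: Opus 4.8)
The plan is to follow the scheme of Lemma \ref{lem3}, writing $\delta(P)=\min\{M_1,M_2,M_3\}$ with $M_1=\min_{x\in[A,B]}\mu(x)$, $M_2=\min_{x\in[A,C]}\mu(x)$ and $M_3=\min_{x\in[B,C]}\mu(x)$, evaluating each one-sided minimum and showing that the smallest equals $\frac{b}{2\cos\gamma}$, attained on $[B,C]$. The decisive difference from Lemma \ref{lem3} is that now $\gamma\le\pi/4$, so Lemma \ref{lem1} no longer applies to the two sides $[B,C]$ and $[A,C]$ through $C$: on them the foot of the altitude ceases to minimize $\mu$, and a separate argument is required. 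For the third side $[A,B]$, however, $\beta=\pi-\alpha-\gamma\ge\pi-\frac{\pi}{2}-\frac{\pi}{4}=\frac{\pi}{4}$ together with $\alpha\ge\beta$ shows both base angles are at least $\pi/4$, so Lemma \ref{lem1} gives $M_1=h_c$, the altitude of $P$ through $C$.

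To make the one-sided analysis explicit I would place $C$ at the origin, $B=(a,0)$ and $A=(b\cos\gamma,b\sin\gamma)$. Along $[B,C]$, with $x=(t,0)$, one has $d(x,C)=t$ increasing, $d(x,B)=a-t$ decreasing, and $d(x,A)$ convex with minimum at the altitude foot $t=b\cos\gamma$. Since $\gamma\le\pi/4$ forces $\frac{b}{2\cos\gamma}\le b\cos\gamma$, the point $x^\ast$ equidistant from $A$ and $C$, i.e. the intersection of the perpendicular bisector of $[A,C]$ with $[B,C]$, lies strictly before the altitude foot, at $t^\ast=\frac{b}{2\cos\gamma}$ with $d(x^\ast,A)=d(x^\ast,C)=\frac{b}{2\cos\gamma}$. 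A short monotonicity check, namely $d(x,A)>d(x,C)$ for $t<t^\ast$ and $d(x,C)=t>t^\ast$ for $t>t^\ast$, gives $\mu(x)>t^\ast$ away from $x^\ast$, so $M_3=\mu(x^\ast)=\frac{b}{2\cos\gamma}$ as soon as the third distance does not interfere at $x^\ast$; the latter means $d(x^\ast,B)=a-\frac{b}{2\cos\gamma}\le\frac{b}{2\cos\gamma}$, that is $a\cos\gamma\le b$. I would derive this key inequality from the identity $\sin\beta-\sin\alpha\cos\gamma=\cos\alpha\sin\gamma\ge0$ (valid since $\alpha\le\pi/2$), which by the law of sines is precisely $b-a\cos\gamma\ge0$; that $t^\ast=\frac{b}{2\cos\gamma}<b\le a$ lies inside $[B,C]$ follows separately from $2\cos\gamma>1$.

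For the remaining two sides I would only need lower bounds. On $[A,C]$ the identical equidistant-point computation, now with the third vertex $B$, shows that the minimizer is the point equidistant from $B$ and $C$ and gives $M_2=\frac{a}{2\cos\gamma}\ge\frac{b}{2\cos\gamma}$ (again $a\cos\gamma\le b$ keeps this point inside $[A,C]$). On $[A,B]$, Lemma \ref{lem1} yields $M_1=h_c=\frac{ab\sin\gamma}{c}$, and the required $h_c\ge\frac{b}{2\cos\gamma}$ reduces through the law of sines to $2\sin\alpha\cos\gamma\ge1$; this holds because $\alpha\ge\pi/3$ (the largest angle of a triangle) gives $\sin\alpha\ge\frac{\sqrt3}{2}$, while $\gamma\le\pi/4$ gives $\cos\gamma\ge\frac{1}{\sqrt2}$, so $\sin\alpha\cos\gamma\ge\frac{\sqrt6}{4}>\frac12$. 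Assembling the three, $\delta(P)=\min\{M_1,M_2,M_3\}=\frac{b}{2\cos\gamma}$.

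The step I expect to be the main obstacle is precisely the analysis on the two sides emanating from $C$: because Lemma \ref{lem1} breaks down when the base angle $\gamma$ drops below $\pi/4$, one must correctly recognize that the constrained minimizer of $\mu$ migrates from the altitude foot to the perpendicular-bisector point, and then confirm both that this point stays on the relevant segment and that the distance to the third vertex is dominated there. Both confirmations rest on the single inequality $a\cos\gamma\le b$, so cleanly isolating and proving it, as $\cos\alpha\sin\gamma\ge0$, is the technical heart of the proof; the estimate $2\sin\alpha\cos\gamma\ge1$ needed for the side $[A,B]$ is the only other genuinely nontrivial inequality.
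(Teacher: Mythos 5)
Your proof is correct and essentially coincides with the paper's: your equidistant points on $[B,C]$ and $[A,C]$ are exactly the paper's points $E$ and $F$ (characterized there by $\angle EAC=\angle FBC=\gamma$), your key inequality $a\cos\gamma\le b$ is the paper's observation that the midpoint of $[B,C]$ lies on $[E,C]$ because $\alpha\le\pi/2$, and the final estimate $2\sin\alpha\cos\gamma\ge\sqrt{3/2}>1$ for the side $[A,B]$ is identical. The only differences are cosmetic (coordinates and the law of sines in place of synthetic angle arguments).
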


\begin{proof} Let us consider the points $E\in [B,C]$ and $F=[A,C]$ such that $\angle EAC= \angle FBC =\gamma$, see Fig. \ref{Fig1}\,b).
Note that $d(E,C)=d(E,A) \geq d(E,B)$ (the midpoint of $[B,C]$ is on the line segment $[E,C]$ due to the inequality $\angle CAB =\alpha \leq \pi/2$).
Hence $\mu(E)=d(E,C)=d(E,A)=\frac{b}{2\cos(\gamma)}$ (see \eqref{mufunk}).
Since $\angle CEA= \pi-2\gamma \geq \pi/2$,
then  we have $d(x,A) \geq d(E,A)=\mu(E)$ for any point $x\in [E,C]$. Moreover, we have $d(x,C) \geq d(E,C)=\mu(E)$ for any $x\in [E,B]$.
Therefore, $N_1:=\min\limits_{x\in [B,C]} \mu(x)=\mu(E)=\frac{b}{2\cos(\gamma)}$.

Analogously, $d(F,C)=d(F,B) \geq d(E,A)$ (the midpoint of $[A,C]$ is on the line segment $[E,C]$ due to the inequality $\angle CBA =\beta \leq \alpha \leq \pi/2$).
Hence $\mu(F)=d(F,C)=d(F,B)=\frac{a}{2\cos(\gamma)}$.
Since $\angle CFB= \pi-2\gamma \geq \pi/2$,
then for any point $x\in [F,C]$ we have $d(x,B) \geq d(F,B)=\mu(F)$. Moreover, for any $x\in [F,A]$ we have $d(x,C) \geq d(F,C)=\mu(F)$.
Therefore, $N_2:=\min\limits_{x\in [A,C]} \mu(x)=\mu(F)=\frac{a}{2\cos(\gamma)}$.

Now, we are going to find $N_3:=\min\limits_{x\in [A,B]} \mu(x)$. Since $\gamma \leq \pi/4$ and $\alpha \leq \pi/2$, we have $\alpha\geq \beta=\pi-\alpha-\gamma\geq \pi/4$.
Hence, we can apply Lemma \ref{lem1} for the side $[A,B]$ of the triangle $P=ABC$. Therefore, $N_3$ is equal to $a \sin(\beta)=b \sin (\alpha)$,
the length of the altitude of the triangle $P$ through the vertex $C$.

Note that $\delta(P)=\min\limits_{x\in P} \mu(x)=\min \{N_1, N_2, N_3\}$.
Since $a\geq b$, we have $N_2=\frac{a}{2\cos(\gamma)}\geq \frac{b}{2\cos(\gamma)}=N_1$, and
since $\gamma \leq \pi/4$, we get $\cos(\gamma)\geq \cos(\pi/4)=1/\sqrt{2}$. Since $\alpha \geq \pi/3$, we have $\sin(\alpha) \geq \sin(\pi/3)=\sqrt{3}/2$.
Further on, $2\sin(\alpha)\cos(\gamma)\geq 2 \cdot \frac{1}{\sqrt{2}} \cdot \frac{\sqrt{3}}{2} =\sqrt{3/2}>1$,
which implies
$N_3=b \sin (\alpha)>\frac{b}{2\cos(\gamma)}=N_1$. Therefore, we get $\delta(P)=\min\limits_{x\in P} \mu(x)=N_1=\frac{b}{2\cos(\gamma)}\,$.
\end{proof}

\begin{remark}\label{rem2}
If $\gamma \leq \pi/4$ and $\alpha = \pi/2$, then  $\beta=\pi/2-\gamma \geq \pi/4$ and
$\delta(P)=\frac{b}{2\cos(\gamma)}=\frac{b}{2\sin(\beta)}=\frac{a}{2\sin(\alpha)}=a/2$.
\end{remark}
\smallskip

From the above lemmas we immediately get {\bf the proof of Theorem \ref{theoechrt}}.
Now, we are going to use Theorem \ref{theoechrt} in order to get
a more simple proof of Theorem \ref{theo2wal}.

\smallskip

\begin{proof}[Proof of Theorem \ref{theo2wal}]
We are going to find all triangles $P$ with maximal value $L(P)/\delta(P)$, where $L(P)=a+b+c$ and $\delta(P)$ is used as shown in Theorem \ref{theoechrt}.
We have $L(P)=3a$ and $\delta(P)=\frac{\sqrt{3}}{2} \cdot a$ for any equilateral triangle $P$, hence, $L(P)/\delta(P)= 2\sqrt{3}$.
We will prove that $L(P)>2\sqrt{3}\cdot \delta(P)$ for any triangle $P$ that is not equilateral.

We consider three cases according to Theorem \ref{theoechrt}.

{\bf Case 1.} We suppose that $\alpha \geq \pi/2$ and, therefore, $\delta(P)=a/2$. Since  $b+c>a$, we get
$L(P)=a+b+c>2a=4\cdot \delta(P)>2\sqrt{3}\cdot \delta(P)$.
\smallskip

{\bf Case 2.} We suppose that $\gamma \geq \pi/4$ and, therefore, $\delta(P)=b\sin(\gamma)$.
Let $D$ be a point on the line segment $[B,C]$, such that $[A,D]$ is the altitude of $P$ through $A$.
$d(A,D)=b\sin(\gamma)=c\sin(\beta)$, $d(B,D)=c\cos(\beta)$, $d(C,D)=b\cos(\gamma)$. Hence,
\begin{eqnarray*}
\frac{L(P)}{\delta(P)}&=&\frac{a+b+c}{b\sin(\gamma)}=\frac{c\cos(\beta)+b\cos(\gamma)+b+c}{b\sin(\gamma)}=
\frac{b\cos(\gamma)+b}{b\sin(\gamma)}+\frac{c\cos(\beta)+c}{b\sin(\gamma)}\\
&&=
\frac{b\cos(\gamma)+b}{b\sin(\gamma)}+\frac{c\cos(\beta)+c}{c\sin(\beta)}=\frac{1+\cos(\beta)}{\sin(\beta)}+\frac{1+\cos(\gamma)}{\sin(\gamma)}=f(\beta)+f(\gamma),
\end{eqnarray*}
where $f(x)=\frac{1+\cos(x)}{\sin(x)}$. By direct computations we get $f^{\prime}(x)=-\frac{1+\cos(x)}{\sin^2(x)}=\frac{1+\cos(x)}{\cos^2(x)-1}=\frac{1}{\cos(x)-1}<0$ and
$f^{\prime \prime}(x)=\frac{\sin(x)}{(\cos(x)-1)^2}>0$ for $x\in (0,\pi)$.

Let us find the minimal value of the function $F(\beta,\gamma):=f(\beta)+f(\gamma)$ for $\beta\geq \gamma \geq \pi/4$, $\beta+\gamma \leq \frac{2}{3}\pi$.
Since $f^{\prime}(x)<0$, the minimal value can be achieved only when $\beta+\gamma=\frac{2}{3}\pi$ and $\gamma \in[\pi/4, \pi/3]$.

Let us consider the function $h(x):=f(x)+f\left(\frac{2}{3}\pi -x\right)$. It is convex due to the inequality $h^{\prime \prime}(x)=
f^{\prime \prime}(x)+f^{\prime \prime}\left(\frac{2}{3}\pi -x\right)>0$.
Hence, $h^{\prime}(x)=
f^{\prime}(x)-f^{\prime}\left(\frac{2}{3}\pi -x\right)$ is strictly increasing and $h^{\prime}(x)<h^{\prime}(\pi/3)=0$ for $x\in (0,\pi/3)$.

Therefore, the function $h$ achieves its minimal value on $[\pi/4, \pi/3]$ exactly at the point
$x=\pi/4$. This minimal value is $h(\pi/3)=2f(\pi/3)=2\sqrt{3}$.

Hence, $\frac{L(P)}{\delta(P)}=F(\beta,\gamma)>2\sqrt{3}$ for all $\beta$ and $\gamma$ such that $\beta\geq \gamma \geq \pi/4$, $\beta+\gamma \leq \frac{2}{3}\pi$,
and $(\beta,\gamma)\neq(\pi/3,\pi/3)$. For $\beta=\gamma=\pi/3$ we get the case of equilateral triangles.
\smallskip

{\bf Case 3.} We suppose that $\gamma \leq \pi/4$ and $\alpha \leq \pi/2$, therefore $\delta(P)=\frac{b}{2\cos(\gamma)}$.
We have $\beta \in [\pi/2-\gamma, \pi/2-\gamma/2]$ due to $\beta+\gamma =\pi-\alpha\geq \pi/2$ and $\gamma+2\beta\leq \gamma+\beta+\alpha=\pi$,
whereas $\gamma\in (0,\pi/4]$. We get $L(P)=a+b+c=\frac{b}{\sin(\beta)}\Bigl(\sin(\alpha)+\sin(\beta)+\sin(\gamma)\Bigr)$ and

\begin{eqnarray*}
\frac{L(P)}{\delta(P)}\!&=&\!\frac{2 \cos(\gamma)}{\sin(\beta)}\Bigl(\sin(\alpha)+\sin(\beta)+\sin(\gamma)\Bigr)=
\frac{2 \cos(\gamma)}{\sin(\beta)}\Bigl(\sin(\beta+\gamma)+\sin(\beta)+\sin(\gamma)\Bigr)\\
\!&=&\!\frac{2 \cos(\gamma)}{\sin(\beta)}\Bigl(\sin(\beta)\bigl(1+\cos(\gamma)\bigr)+\sin(\gamma)\bigl(1+\cos(\beta)\bigr)\Bigr)
\\
\!&=&\!2 \cos(\gamma)\bigl(1+\cos(\gamma)\bigr)+\sin(2\gamma)\cdot \frac{1+\cos(\beta)}{\sin(\beta)}=: G(\beta,\gamma).
\end{eqnarray*}
It is easy to see that $\frac{\partial G}{\partial \beta}(\beta,\gamma)=
-\sin(2\gamma)\cdot \frac{1+\cos(\beta)}{\sin^2(\beta)}=\frac{\sin(2\gamma)}{\cos(\beta)-1}<0$ for $\pi \geq \beta\geq \gamma >0$.
Therefore, the minimal value of $G(\beta,\gamma)$
for $\beta \in [\pi/2-\gamma, \pi/2-\gamma/2]$ and for a given $\gamma\in [0,\pi/4]$ can be achieved only when $\beta=\pi/2-\gamma/2$.
By direct computations we get
\begin{eqnarray*}
G(\pi/2-\gamma/2,\gamma)&=&2 \cos(\gamma)\bigl(1+\cos(\gamma)\bigr)+\sin(2\gamma)\cdot \frac{1+\sin(\gamma/2)}{\cos(\gamma/2)}\\
&=&2\cos(\gamma)+2\cos^2(\gamma)+4\bigl(1+\sin(\gamma/2)\bigr)\cos(\gamma)\sin(\gamma/2)\\
&=&4\cos(\gamma)\bigl(1+\sin(\gamma/2)\bigr)=g\bigl(\sin(\gamma/2)\bigr),
\end{eqnarray*}
where $g(t)=4(1-2t^2)(1+t)=4(1+t-2t^2-2t^3)$.
Since $0<\gamma/2 \leq \pi/8$, we get $0<t=\sin(\gamma/2)<\sin(\pi/8)=\sqrt{\frac{1}{2}\bigl(1-\cos (\pi/4)\bigr)}=\frac{\sqrt{2+\sqrt{2}}}{2}=:t_0$.

Since $g^{\prime\prime}(t)=-16(3t+1)$, $g$ is concave on the interval $[0,\infty)$. The
minimal value of $g(t)$ on the interval $[0,t_0]$ is achieved either at $t=0$, or at $t=t_0$.
Since $g(0)=4$ and $h(t_0)=2\sqrt{2}+\sqrt{4+2\sqrt{2}}>2\sqrt{3}$, we get that $\frac{L(P)}{\delta(P)}=G(\beta,\gamma)>2\sqrt{3}$
for all $\beta \in [\pi/2-\gamma, \pi/2-\gamma/2]$ and $\gamma\in (0,\pi/4]$.
The theorem is completely proved.
\end{proof}
\smallskip

\section{Convex curves and polygons of maximal perimeter}\label{sect.3}

A half-disk in the Euclidean plane $\mathbb{R}^2$ is a set which is isometric to
$$
HD(r)=\left\{x=(x_1,x_2)\in \mathbb{R}^2\,|\, x_2\geq 0, \, x_1^2+x_2^2 \leq r^2\right\}
$$
for some fixed $r>0$. The boundary of $HD(r)$ is the union of a half-circle of radius $r$ and a line segment of length $2r$.

\begin{lemma}\label{lem5}
Let $\Gamma$ be the boundary of some half-disk of radius $r$ in $\mathbb{R}^2$. Then $\delta(\Gamma)=r$ and $L(\Gamma)=(2+\pi) \cdot \delta(\Gamma)$,
where $L(\Gamma)$ means the perimeter of $\Gamma$.
\end{lemma}

\begin{proof}
The equality $\delta(\Gamma)=r$ follows from Proposition \ref{semcircle}. The second statement is obvious.
\end{proof}

\begin{theorem}\label{maxgeneral}
For any closed convex curve  $\Gamma$ in the Euclidean plane,  one has
$$
L(\Gamma)\leq (2+\pi) \cdot \delta(\Gamma),
$$
with equality exactly for boundaries of half-disks.
\end{theorem}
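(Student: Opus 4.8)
The plan is to reduce the inequality to the perimeter monotonicity of Proposition \ref{monotper} by showing that the convex body bounded by $\Gamma$ is always contained in a half-disk of radius $\delta(\Gamma)$. Write $r=\delta(\Gamma)$ and let $K$ be the compact convex set with $\partial K=\Gamma$. Since $\mu$ is continuous on the compact set $\Gamma$, the infimum in \eqref{chebir1} is attained at some extremal point $p_0\in\Gamma$, so that $d(p_0,q)\le \mu(p_0)=r$ for every $q\in\Gamma$. Because the farthest point of a convex body from a fixed point is attained on its boundary, this yields $K\subset\{x\in\mathbb{R}^2 \mid d(x,p_0)\le r\}$.

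Next I would exploit that $p_0$ is a boundary point of $K$. Let $\ell$ be a supporting line of $K$ at $p_0$ and let $H$ be the closed half-plane bounded by $\ell$ that contains $K$; note that $p_0\in\ell$. Combining the two containments gives
$$
K\subset \{x\in\mathbb{R}^2 \mid d(x,p_0)\le r\}\cap H.
$$
Since $\ell$ runs through the center $p_0$ of the disk, the set on the right is precisely a half-disk $HD$ of radius $r$, its diameter being the chord of the disk lying on $\ell$. Thus $\Gamma$ lies inside the convex curve $\partial(HD)$.

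Finally, Proposition \ref{monotper} gives $L(\Gamma)\le L(\partial(HD))$, and Lemma \ref{lem5} evaluates $L(\partial(HD))=(2+\pi)r=(2+\pi)\delta(\Gamma)$, which is the asserted bound. For the equality case, if $L(\Gamma)=(2+\pi)\delta(\Gamma)$, then equality holds in Proposition \ref{monotper}, forcing $K=HD$, so $\Gamma$ is the boundary of a half-disk; the converse is exactly Lemma \ref{lem5}. I expect the only delicate points to be the existence of the extremal point $p_0$ (continuity of $\mu$ together with compactness of $\Gamma$) and the verification that the intersection of the disk with the supporting half-plane is genuinely a half-disk, which hinges on $\ell$ passing through the disk's center. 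A minor separate check is the degenerate case in which $K$ is a segment (a $2$-gon): there the supporting line is the line carrying the segment, $K$ is the diameter of the resulting half-disk, and the inequality is strict since $4r<(2+\pi)r$, in agreement with the non-degenerate analysis.
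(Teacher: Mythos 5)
Your proof is correct and follows essentially the same route as the paper's: pick an extremal point $o\in\Gamma$, enclose $\Gamma$ in the radius-$\delta(\Gamma)$ disk centered at $o$ intersected with a supporting half-plane at $o$, and conclude by the perimeter monotonicity of Proposition \ref{monotper} together with Lemma \ref{lem5}. Your additional remarks on the existence of the extremal point and the degenerate segment case are harmless elaborations of steps the paper takes for granted.
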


\begin{proof}
Let $\Gamma$ be a convex curve and let $o\in \Gamma$ be an extremal point for $\delta(\Gamma)$.
This means that $\Gamma \subset \{ x\in \mathbb{R}^2\,|\, d(x,o)\leq r\}=:D$, where $r=\delta(\Gamma)$.
Moreover, $\Gamma$ is a convex curve, and therefore there is a straight line $l$ through the point $o$
in $\mathbb{R}^2$ such that $\Gamma$ is situated in one of the half-planes determined by $l$.
Let us denote this half-plane by $H(l)$. Therefore, $\Gamma$ is a subset of the half-disk $HD:=D\cap H(l)$ of radius $r$.

By the monotonicity of the perimeter of convex curves (see Proposition \ref{monotper}) we get that the perimeter $L(\Gamma)$ of $\Gamma$ is less than or equal to
$(2+\pi)\cdot r$, the perimeter of the boundary of the semi-disk $HD$.
Moreover, equality holds if and only if $\Gamma$ is the boundary of $HD$. The theorem is proved.
\end{proof}
\smallskip

Now we are going to find all convex $n$-gons $P$ ($n\geq 2$) of maximal perimeter among all convex $n$-gons with the same value of $r=\delta(P)$.
At first, we consider an explicit construction of a special family of convex $n$-gons $U_n$ for $n\geq 2$.
\smallskip

Let us consider a regular $2(n-1)$-gon $P_n$, inscribed in a circle of radius $r>0$.
Take  points $A,B \in P_n$ that are opposite vertices of this polygon (i.e., $d(A,B)=2r$).
Now, consider one of the two half-planes determined by the straight line $AB$, say $H$, and consider the union $U_n$ of the line segment $[A,B]$
with the polygonal line $P_n \cap H$. This is an $n$-gon inscribed in the boundary of the half-disk $\{x\in \mathbb{R}^2\,|\, d(x,o)\leq r\}\cap H$,
where $o$ is the midpoint of the line segment $[A,B]$, see Fig.~\ref{Fig2}.
We have $\delta(U_n)=r$ by Proposition~\ref{semcircle}.
For $n=2$ we see that $P_2=U_2$ is a line segment of length $2r$.

\begin{figure}[t]
\begin{minipage}[h]{0.25\textwidth}
\center{\includegraphics[width=0.92\textwidth]{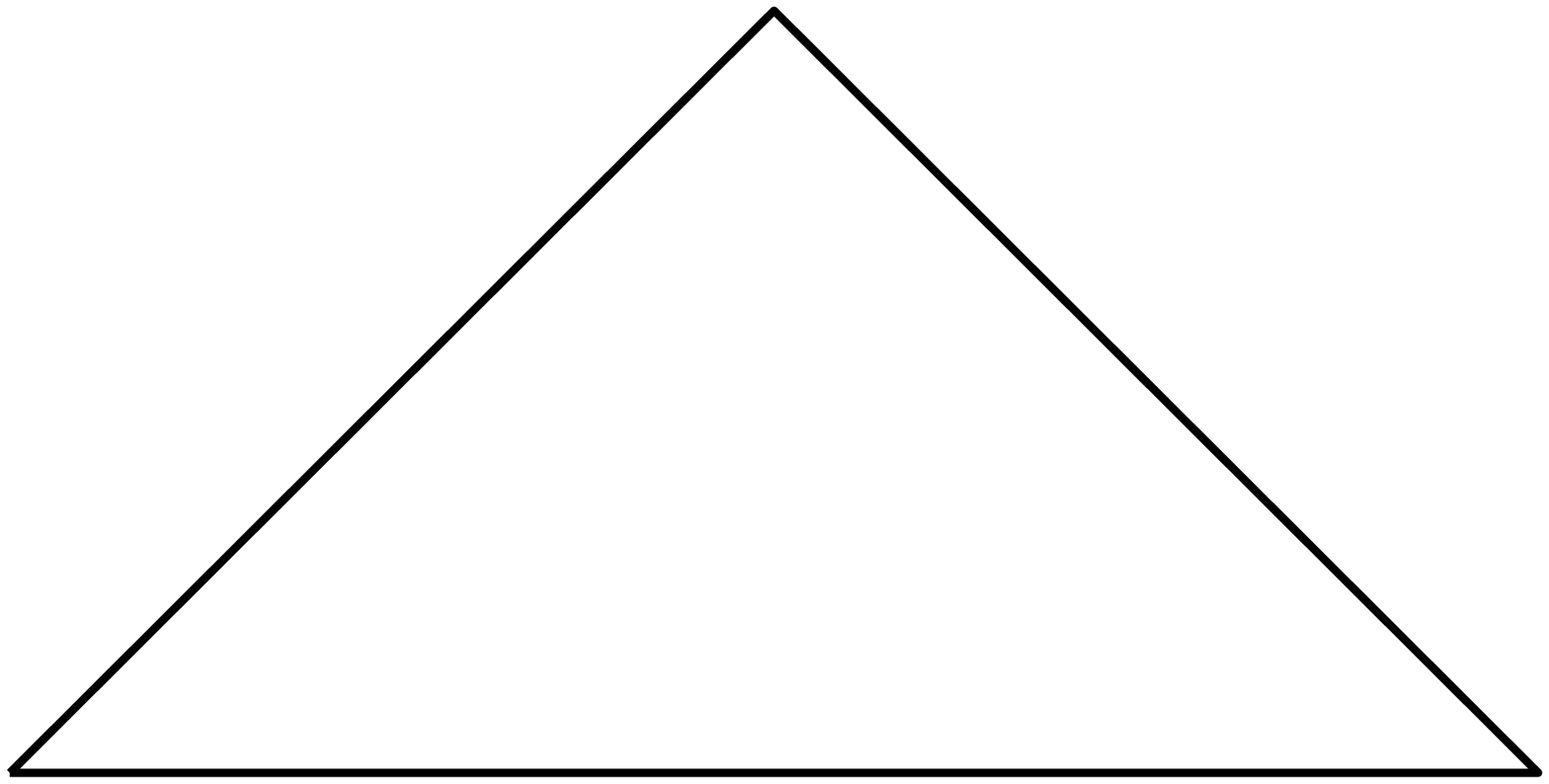}} a) \\
\end{minipage}
\quad\quad
\begin{minipage}[h]{0.25\textwidth}
\center{\includegraphics[width=0.99\textwidth]{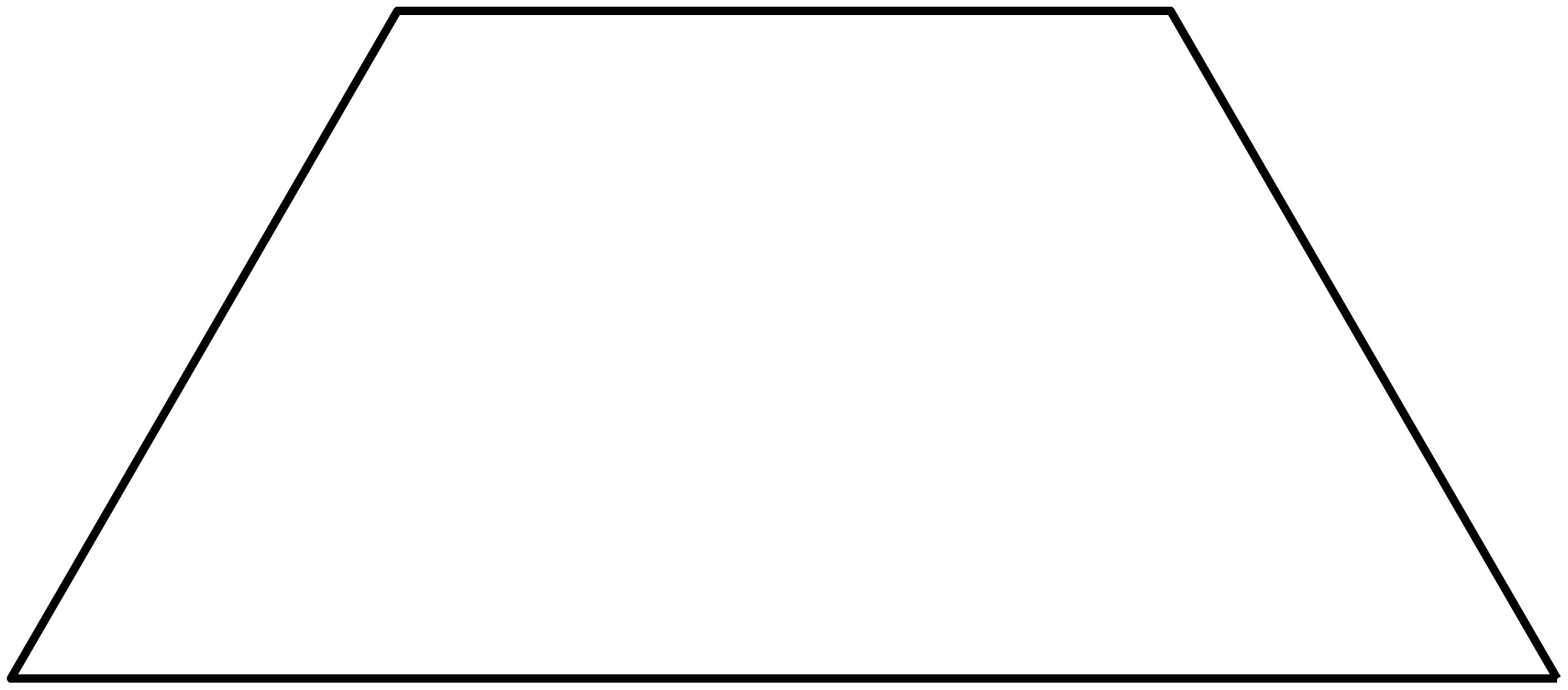}} b) \\
\end{minipage}
\quad\quad
\begin{minipage}[h]{0.25\textwidth}
\center{\includegraphics[width=0.90\textwidth]{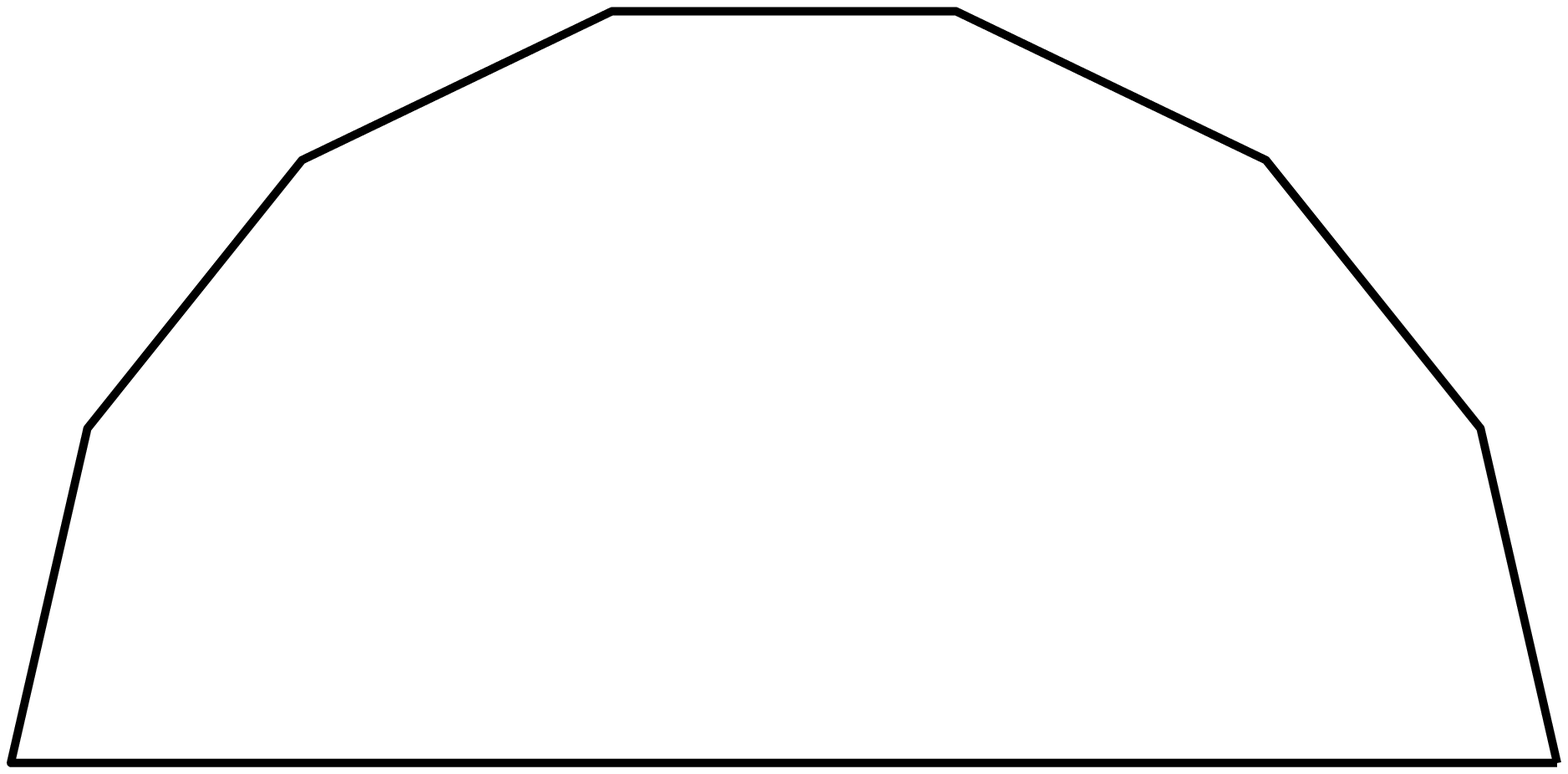}} c) \\
\end{minipage}
\caption{The polygons $U_n$ for: a) $n=3$; b) $n=4$; c) $n=8$.}
\label{Fig2}
\end{figure}

The perimeter $L(U_n)$ of $U_n$ is equal to $\lambda_n\cdot r$,  where
$\lambda_n=2\left(1+(n-1)\sin \bigl(\frac{\pi}{2(n-1)}\bigr)\right)$ for $n\geq 2$.
Note that $\lambda_2=4$, since $L(U_2)$ is the double length of the line segment $U_2$.
It is also easy to see that $\lambda_3=2(1+\sqrt{2})$, $\lambda_4=5$, and $\lambda_n \to 2+\pi$ as $n \to \infty$.

\begin{lemma}\label{lem6}
The above defined sequence $\{\lambda_n\}$, $n\geq 2$, is strictly increasing.
\end{lemma}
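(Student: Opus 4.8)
The plan is to view $\lambda_n$ as the restriction to integers of a smooth function of a real variable and to reduce the claimed monotonicity to the classical fact that $\frac{\sin t}{t}$ is strictly decreasing on $(0,\pi)$. Writing $\lambda_n = 2 + 2(n-1)\sin\bigl(\frac{\pi}{2(n-1)}\bigr)$, it suffices to prove that the function $\phi(x) = x\sin\bigl(\frac{\pi}{2x}\bigr)$ is strictly increasing for real $x \geq 1$; then $\lambda_n = 2 + 2\phi(n-1)$ gives the result at once.

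The key idea is the substitution $t = \frac{\pi}{2x}$, under which $x = \frac{\pi}{2t}$ ranges over $[1,\infty)$ precisely when $t$ ranges over $(0,\pi/2]$, and $x$ increasing corresponds to $t$ decreasing. In these terms $\phi(x) = \frac{\pi}{2}\cdot\frac{\sin t}{t}$. Hence the monotonicity of $\phi$ in $x$ is equivalent to the monotonicity of $\frac{\sin t}{t}$ in $t$, with the direction reversed: $\phi$ increases in $x$ exactly when $\frac{\sin t}{t}$ decreases in $t$.

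It then remains to establish that $\psi(t) := \frac{\sin t}{t}$ is strictly decreasing on $(0,\pi/2]$. I would do this by differentiating: $\psi'(t) = \frac{t\cos t - \sin t}{t^2}$, so it suffices to show that the numerator $q(t) := t\cos t - \sin t$ is negative there. Since $q(0) = 0$ and $q'(t) = -t\sin t < 0$ on $(0,\pi/2]$, the function $q$ is strictly decreasing and hence strictly negative on that interval. Therefore $\psi' < 0$, so $\psi$ is strictly decreasing, $\phi$ is strictly increasing, and the sequence $\{\lambda_n\}$ is strictly increasing.

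I expect no serious obstacle here: the only nonroutine point is choosing the substitution $t = \frac{\pi}{2x}$, which converts the awkward product $x\sin\bigl(\frac{\pi}{2x}\bigr)$ into a scalar multiple of the well-understood quotient $\frac{\sin t}{t}$; after that the sign analysis of $t\cos t - \sin t$ via its derivative $-t\sin t$ is entirely elementary. Alternatively, one could differentiate $\phi$ directly, obtaining $\phi'(x) = \sin t - t\cos t = -q(t) > 0$ with $t = \frac{\pi}{2x}$, which avoids introducing $\psi$ altogether; the two routes are essentially identical.
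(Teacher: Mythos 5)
Your proof is correct and follows essentially the same route as the paper: both reduce the claim to the strict monotonicity of $\frac{\sin t}{t}$ via the identity $(n-1)\sin\bigl(\frac{\pi}{2(n-1)}\bigr)=\frac{\pi}{2}\cdot\frac{\sin t}{t}$ with $t=\frac{\pi}{2(n-1)}$, and both establish that monotonicity by showing $t\cos t-\sin t<0$ from its derivative $-t\sin t<0$. The only cosmetic difference is your explicit intermediate function $\phi$.
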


\begin{proof}
Let us consider the function $\psi(x)=\frac{\sin (x)}{x}$ for $x>0$.
We have $\psi^{\prime}(x)=\frac{\theta(x)}{x^2}$, where $\theta(x)=x\cos(x)-\sin(x)$. Since $\theta^{\prime}(x)=-x\sin(x)<0$ for $x\in (0,\pi)$, then
$\theta(x)<\theta(0)=0$ for all $x\in (0,\pi)$. This means that $\psi^{\prime}(x)<0$ and $\psi(x)$ strictly decreases on the interval $(0,\pi)$.
In particular, $\psi\left(\frac{\pi}{2(n-1)} \right)<\psi\left(\frac{\pi}{2n} \right)$ for all $n\geq 2$, which is equivalent to $\lambda_n <\lambda_{n+1}$.
\end{proof}

\begin{theorem}\label{maxpolygon}
For any convex $n$-gon  $P\subset \mathbb{R}^2$, $n\geq 2$, one has
$$
L(\Gamma)\leq 2\left(1+(n-1)\sin \left(\frac{\pi}{2(n-1)}\right)\right) \cdot \delta(\Gamma),
$$
with equality exactly for the $n$-gon $U_n$ defined above.
\end{theorem}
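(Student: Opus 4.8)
The plan is to follow the same reduction to a half-disk as in the proof of Theorem~\ref{maxgeneral}, and then to reduce the problem to a one-dimensional optimization over the angular gaps between vertices lying on the bounding semicircle, which is resolved by the concavity of the sine.

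First I would let $o\in P$ be an extremal point for $\delta(P)$, so that $P\subset D:=\{x\,|\,d(x,o)\le r\}$ with $r=\delta(P)$, and choose a line $l$ through $o$ with $P$ contained in one closed half-plane $H(l)$; thus $P$ is a convex $n$-gon inscribed in the half-disk $HD=D\cap H(l)$, whose flat side is a diameter of $D$ with midpoint $o$. Placing $o$ at the origin with $l$ the $x$-axis and $P$ in $\{y\ge 0\}$, I would first note that a perimeter-maximizing convex $n$-gon in $HD$ exists by compactness, and that for such a maximizer every vertex lies on $\partial HD$: fixing the two neighbors of a vertex $V_i$, the contribution $d(V_{i-1},V_i)+d(V_i,V_{i+1})$ is a convex function of $V_i$ on the convex set of admissible positions, hence is maximized at an extreme point, which must lie on $\partial HD$. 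A further variational step shows that no vertex sits strictly inside the flat segment, since a bottom vertex can be pushed to an endpoint $(\pm r,0)$, lengthening the bottom edge; so all $n$ vertices may be taken on the closed circular arc $\{(r\cos\psi,r\sin\psi)\,|\,\psi\in[0,\pi]\}$.

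With the vertices on the arc at angles $0\le\psi_0<\psi_1<\cdots<\psi_{n-1}\le\pi$, the polygon is their convex hull, whose edges are the $n-1$ consecutive chords together with the closing chord joining $\psi_{n-1}$ to $\psi_0$. Writing $\Delta_j=\psi_{j+1}-\psi_j$ and $\Psi=\psi_{n-1}-\psi_0=\sum_j\Delta_j\le\pi$, a chord of central angle $\theta$ has length $2r\sin(\theta/2)$, so
$$
L(P)=2r\Bigl(\sum_{j=0}^{n-2}\sin(\Delta_j/2)+\sin(\Psi/2)\Bigr).
$$
Since the arguments $\Delta_j/2$ lie in $[0,\pi/2]$, where $\sin$ is strictly concave, Jensen's inequality gives $\sum_j\sin(\Delta_j/2)\le (n-1)\sin\bigl(\tfrac{\Psi}{2(n-1)}\bigr)$, with equality exactly when all $\Delta_j$ are equal. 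Hence $L(P)\le 2r\,g(\Psi)$ with $g(\Psi)=(n-1)\sin\bigl(\tfrac{\Psi}{2(n-1)}\bigr)+\sin(\Psi/2)$, and since $g'(\Psi)=\tfrac12\bigl(\cos\bigl(\tfrac{\Psi}{2(n-1)}\bigr)+\cos(\Psi/2)\bigr)>0$ on $(0,\pi)$, the function $g$ is strictly increasing and attains its maximum at $\Psi=\pi$. This yields $L(P)\le 2r\bigl(1+(n-1)\sin\bigl(\tfrac{\pi}{2(n-1)}\bigr)\bigr)=\lambda_n\cdot\delta(P)$, and equality forces $\Psi=\pi$ together with all gaps equal to $\tfrac{\pi}{n-1}$, i.e.\ exactly the polygon $U_n$ (the case $n=2$ is covered as well, giving $4r$).

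The final concavity and monotonicity computations are routine. The main obstacle I expect is the variational reduction, namely proving that a perimeter maximizer has all its vertices on the bounding arc (equivalently, that the diameter may be taken as an edge and that no vertex lies in the interior of the flat side), while keeping the polygon convex and the vertex count equal to $n$ throughout the deformation. I would treat this by the extreme-point argument for single-vertex moves together with a separate lemma excluding vertices interior to the flat segment, tracking strict inequalities so as to secure the uniqueness of the extremizer $U_n$.
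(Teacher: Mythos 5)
Your endgame is genuinely different from the paper's and, I think, cleaner. The paper, after reducing to a maximizer $P_0$ inscribed in the boundary of the half-disk, argues locally: it perturbs a single vertex $A_i$ along the arc between its neighbours to force $d(A_{i-1},A_i)=d(A_i,A_{i+1})$, treats the two vertices on the diameter by a separate stretching argument, and needs an induction on $n$ together with Lemma~\ref{lem6} to rule out the maximizer being an $m$-gon with $m<n$. Your single Jensen inequality
$$
\sum_{j=0}^{n-2}\sin(\Delta_j/2)\leq (n-1)\sin\Bigl(\tfrac{\Psi}{2(n-1)}\Bigr),
$$
followed by the monotonicity of $g(\Psi)=(n-1)\sin\bigl(\tfrac{\Psi}{2(n-1)}\bigr)+\sin(\Psi/2)$, delivers the bound, the equality case, and (since Jensen tolerates zero gaps, i.e.\ coincident vertices) the degenerate configurations all at once, so the induction on $n$ becomes unnecessary. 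That part of your argument is correct and complete.

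The weak point is exactly the one you flag: the reduction to a configuration with all vertices on $\partial(HD)$. The extreme-point argument as you state it does not close. First, a convex function attains its maximum \emph{at some} extreme point of the admissible set, but a given maximizer need not sit at one, so you cannot conclude that the vertex $V_i$ of your maximizer lies on $\partial(HD)$ --- only that some (possibly different) maximizer does, and iterating this over $i$ requires care. Second, the admissible region for $V_i$ is $HD$ intersected with the half-planes enforcing convexity at $V_{i-1}$ and $V_{i+1}$, and the intersection point of those two constraint lines can be an extreme point lying in the \emph{interior} of $HD$; there the polygon degenerates to fewer vertices rather than landing on the arc. Both issues are repaired by the paper's mechanism: pick a point $S$ interior to the polygon and push any vertex not on $\partial(HD)$ slightly outward along the ray from $S$; the new polygon is convex, stays in $HD$, and strictly contains the old one, so Proposition~\ref{monotper} gives a \emph{strict} perimeter increase. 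This also supplies the strictness you need for the uniqueness of $U_n$ (if any vertex of the original $P$ is off $\partial(HD)$, or off the endpoints of the diameter, then $L(P)$ is strictly below the maximum). With that substitution your proof goes through.
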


\begin{proof}
For $r>0$ and $n \geq 2$, we denote by $\mathcal{K}_n(r)$ the set of all convex polygons $P$ in the Euclidean plane with $\delta(P)=r$ and having
no more than $n$ vertices. In particular, polygons with 2 vertices are line segments, and we repeat that the perimeter of such a $2$-gon is assumed to be equal to its double length
(and the perimeter is continuous functional on  $\mathcal{K}_n(r)$ for every $n \geq 2$).
Let $\mathcal{M}_n(r)$ be the supremum of all perimeters of polygons from the set $\mathcal{K}_n(r)$.
Let us prove that there is a polygon in $\mathcal{K}_n(r)$ whose perimeter equals $\mathcal{M}_n(r)$.

Take a sequence $\{P_k\}$, $k\in \mathbb{N}$, $P_k\in \mathcal{K}_n(r)$, such that $L(P_k) \to \mathcal{M}_n(r)$ as $k \to \infty$.
Without loss of generality, we may suppose that all $P_k$ are situated in some ball of radius $r$
(we can move each $P_k$ in such a way that an extremal point for $\delta(P_k)$ coincides with the center of a given ball).
Using compactness arguments, we may assume that $P_k\to P_0$ (in the Hausdorff distance) for some $P_0 \in \mathcal{K}_n(r)$ as $k\to \infty$, hence $L(P_0)=\mathcal{M}_n(r)$.

Now we are going to prove that $P_0$ is isometric to the $n$-gon $U_n$ as above.
For this goal we will use induction on $n$. For $n=2$, $P_0$ is a line segment of length $2r$, that is $U_2$.
Suppose that  the assertion is true for all $k<n$ and prove it for $n$.

First note that $P_0$ is an $n$-gon. Indeed, if
$P_0$ is an $m$-gon for some $m<n$, then $\lambda_m = \mathcal{M}_m(r)\geq L(P_0)$ by the induction assumptions.
On the other hand, we know that $\mathcal{M}_n(r) \geq L(U_n)=\lambda_n$, but $\lambda_n>\lambda_m$ by Lemma \ref{lem6}.
Therefore, we get $\mathcal{M}_n(r)>L(P_0)$, and this contradiction proves that $P_0$ is an $n$-gon.

Now we are going to show that $P_0$ is isometric to $U_n$.
Let $o\in P_0$ be an extremal point for $\delta(P_0)$.
This means that $P_0 \subset \{ x\in \mathbb{R}^2\,|\, d(x,o)\leq r\}=:D$.
Moreover, $P_0$ is a convex $n$-gon, and therefore we have a straight line $l$ through the point $o$
in $\mathbb{R}^2$ such that $P_0$ is situated in one of the half-planes determined by $l$.
Let us denote this half-plane by $H(l)$. Therefore, $P_0$ is a subset of the half-disk $HD:=D\cap H(l)$ of radius~$r$.

Note that all vertices of $P_0$ are situated in the boundary $\partial(HD)$ of the half-disk $HD$.
Indeed, suppose the contrary. Take a point $S$ inside of $P_0$ and consider the rays $SA_i$, $i=1,...,n$, where $A_i$ are the vertices of $P_0$.
If some point $A_i$ is not in $\partial(HD)$ then we can modify $P_0$ into a polygon $P_0^{\prime}$, replacing the vertex $A_i$ by some point
$A_i^{\prime}$ lying on the ray $SA_i$
such that $d(S,A_i^{\prime})>d(S,A_i)$ and $A_i^{\prime}$ is taken so close to $A_i$ that $A_i^{\prime}\in HD$ and $P_0^{\prime}$ is convex.
By the monotonicity of the perimeter of convex curves (Proposition~\ref{monotper}), we get that  $L(P_0^{\prime})>L(P_0)=\mathcal{M}_n(r)$, which is
not true. This contradiction shows that all vertices of $P_0$ are situated in the boundary $\partial(HD)$ of $HD$.

Now let us consider vertices of $P_0$ that are in the line $l$. Since $o\in l\cap P_0$, there is at least one such vertex.
On the other hand, it is impossible to have three or more such vertices (in this case $P_0$ is an $m$-gon with some $m<n$).
Hence we have two possibilities: 1) only one vertex of $P_0$ (that should coincide with the point $o$) is in $l$, 2) exactly two vertices of $P_0$ are in $l$.
Fortunately, the first possibilities could be easily reduced to the second one. Indeed, one can rotate the polygon $P_0$ around the point $o$
so that it still remains in the half-disk $HD$, but its two vertices will already be on the line $l$.
Hence, without loss of generality, we may assume that exactly two vertices (say $A_1$ and $A_2$) of $P_0$ are in $l$.
Let us prove that $d(A_1,A_2)=2r$, i.e., $d(o,A_1)=d(o,A_2)=r$.
Suppose the contrary and modify the polygon $P_0$ into a polygon $P_0^{\prime}$, replacing the points $A_1$ and $A_2$ with two points
$A_1^{\prime}, A_2^{\prime} \in l$ such that $d(A_1^{\prime},o)=d(A_2^{\prime},o)=r$. It is clear that $P_0$ is inside of $P_0^{\prime}$,
hence (by Proposition \ref{monotper}) we have $L(P_0^{\prime})>L(P_0)=\mathcal{M}_n(r)$, which is
not true. Hence, $d(A_1,A_2)=2r$.

The last step is to prove that the rays emanating from the point $o$ successively to all the vertices of the polygon $P_0$
divide the semicircle
$\{ x\in \mathbb{R}^2\,|\, d(x,o)= r\} \cap H(l)$
into equal arcs. Indeed, this will imply that $P_0$ is isometric to $U_n$.

Now it suffices to prove the equality $d(A_{i-1},A_i)=d(A_{i+1},A_i)$ for three successive vertices $A_{i-1}$, $A_i$, $A_{i+1}$ of $P_0$.
Let us take a point $B$ on the arc between $A_{i-1}$ and $A_{i+1}$ and consider the polygon $P_0^{\prime}$ which is obtained from $P_0$
by replacing the point $A_i$ with~$B$. By construction of $P_0$ we have $L(P_0)\geq L(P_0^{\prime})$, equivalent
to the inequality $d(A_{i-1},A_i)+d(A_{i+1},A_i)\geq d(A_{i-1},B)+d(A_{i+1},B)$.
Hence $A_i$ is the point that gives the maximal value of the function $B\mapsto d(A_{i-1},B)+d(A_{i+1},B)$.

Put $\varphi:=\frac{1}{2} \angle A_{i-1}oA_{i+1}$ and $\psi:=\frac{1}{2}\angle A_{i-1}oB$.
Then
$$
d(A_{i-1},B)+d(A_{i+1},B)=2r\Bigl(\cos(\psi)+\cos(\varphi-\psi)\Bigr).
$$
Note that $\varphi \in (0,\pi/2]$ and $\psi\in [0,\varphi]$.
The minimal value of the function $h(\psi)=\cos(\psi)+\cos(\varphi-\psi)$ on the interval $[0,\varphi]$
is attained at the point $\psi=\varphi/2$, since
$h^{\prime}(\psi)=\sin(\varphi-\psi)-\sin(\psi) >(<)\,0$ for $\psi \in [0,\varphi/2)$ (for $\psi \in (\varphi/2, \varphi]$).
This means that $d(A_{i-1},A_i)=d(A_{i+1},A_i)$. The theorem is completely proved.
\end{proof}

\section{Final remarks}

Note that there is no result for $n$-gons, $n \geq 4$, similar to
Theorem \ref{theo2wal} for triangles.
We do not even know quadrangles that have the smallest perimeter among all convex quadrangles with a given value of \eqref{chebir1}.
It is easy to check that squares have no such property. In \cite{Walter2017}, it is conjectured that
$L(P)\geq \frac{4}{3}\sqrt{2\sqrt{3}+3}\,\cdot \delta(P)$
for any convex quadrangle $P\subset \mathbb{R}^2$.
Note that this inequality becomes an equality for quadrangles $P$ called ``magic kites'' (This notion is
taken from \cite{Walter2017} and means
convex quadrangles which are hypothetically extreme with respect to
the Chebyshev radius.)
Up to similarity, such a quadrangle could be represented by its vertices, that are as follows:
$$
(-1, 0), \quad (1, 0), \quad
\left(0, \frac{\sqrt{3}}{3}\sqrt{2\sqrt{3}+3}\right), \quad
\left(0, -\frac{1}{3}\sqrt{2\sqrt{3}-3}\,\right).
$$
Note also that  we have $L(P)=\frac{8}{\sqrt{5}}\,\cdot \delta(P)$ for a square $P$ and
$\frac{8}{\sqrt{5}}>\frac{4}{3}\sqrt{2\sqrt{3}+3}\approx 3,389946$.

It would be interesting to develop useful tools for the computation of
the relative Chebyshev radius \eqref{chebir1} in the case of quadrangles as well as  to obtain a result similar to Theorem \ref{theoechrt} for triangles.

We propose one more problem which is close to those considered above.

\begin{problem}
Given real number $l>0$ and a natural number $n \geq 3$, determine the best possible constant $C(n,l)$ such that the inequality
$L(P) \geq C(n,l)$ holds for every convex polygon $P$ with $n$ vertices and with the following property:
for the endpoints $A$ and $B$ of every side of $P$, there is a point $C\in P$ such that $d(A,C)=d(B,C) \geq l$.
\end{problem}

It is easy to show that $C(3,l)=3l$. On the other hand, the answer for $n\geq 4$ is unknown.
It is also interesting to study the analogous problem by
taking, instead of the
perimeter $L(P)$, the area $S(P)$ of the polygon $P$.

\vspace{10mm}

\end{document}